\documentclass[reqno]{amsart}
\allowdisplaybreaks[4]

\usepackage{amsmath, amssymb}
\usepackage{amsthm}
\usepackage{enumitem}
\usepackage{mathrsfs}

\usepackage{hyperref}
\hypersetup{colorlinks=true,urlcolor=blue,linkcolor=blue,citecolor=blue}

\usepackage{url}

\setlist[enumerate]{label={\upshape (\roman*)}}

\makeatletter
	\@addtoreset{equation}{section}

\theoremstyle{plain}
\newtheorem{theorem}{\indent\sc Theorem}[section]
\newtheorem{lemma}[theorem]{\indent\sc Lemma}
\newtheorem{corollary}[theorem]{\indent\sc Corollary}
\newtheorem{proposition}[theorem]{\indent\sc Proposition}

\theoremstyle{definition}
\newtheorem{definition}[theorem]{\indent\sc Definition}
\newtheorem{remark}[theorem]{\indent\sc Remark}

\newcommand{\cE}{\mathcal{E}}
\newcommand{\cF}{\mathcal{F}}

\begin{document}

\title{Dynkin games for Markov processes associated with semi-Dirichlet forms}
\author[T. Ooi \and T. Uemura]{Takumu Ooi \and Toshihiro Uemura}
\address[T.Ooi]{Research Institute for Mathematical Sciences, Kyoto University, Kyoto-shi, Kyoto 606-8502, Japan.}
\email[T.Ooi]{ooitaku@kurims.kyoto-u.ac.jp}
\address[T.Uemura]{Department of Mathematics, Faculty of Engineering Science, Kansai University, Suita-shi, Osaka 564-8680, Japan.}
\email[T.Uemura]{t-uemura@kansai-u.ac.jp} 

\begin{abstract}
We consider Dynkin games for Markov processes associated with semi-Dirichlet forms. Dynkin games are the optimal stopping games introduced as the models of zero-sum games by two players. We prove that the solution to the certain variational inequality with two obstacles is the equilibrium price of the Dynkin game. Moreover, we obtain the saddle point of the game.
\end{abstract}

\maketitle

\section{Introduction}\label{Introduction}
We consider the optimal stopping game for Markov process, which is originally considered by Dynkin \cite{D} as the model of zero-sum game by two players, and this is called the Dynkin game. Nowadays the Dynkin game is known as a generalization of an optimal stopping problem and has been studied under various settings (see \cite{K, SY} for the history, for example). In \cite{N}, Nagai revealed a relation between the solutions to the variational inequality with one obstacle for a regular Dirichlet form and to the optimal stopping problem for the corresponding symmetric Markov process. Zabczyk \cite{Z} proved the existence of solution to the variational inequality with two obstacles for a regular Dirichlet form and also showed that the solution is the equilibrium price of the value function of the Dynkin game for the corresponding symmetric Markov process. Zhang \cite{Zh} generalized
Nagai's result to Markov processes associated with semi-Dirichlet forms. In this paper, we consider a Dynkin game for Markov processes associated with semi-Dirichlet forms.

To explain the Dynkin games for Markov processes associated with semi-Dirichlet forms, we recall definitions and some properties for semi-Dirichlet forms from \cite{O}. Let \(E\) be a locally compact separable metric space, \(\mathcal{B}(E)\) be the Borel \(\sigma\)-field on \(E\), and \(m\) be an everywhere dense positive Radon measure on \(E.\) Denote by \((\cdot, \cdot)\) and \(\|\cdot\|\) the inner product and the norm in \(L^2(E;m)\), respectively. If a dense linear subspace \(\cF\) and a bilinear form \(\cE\) defined on \(\cF \times \cF\) satisfy the following conditions \((\cE.1)\), \((\cE.2)\), \((\cE.3)\) and \((\cE.4)\), then \((\cE, \cF)\) is called a lower-bounded semi-Dirichlet form on \(L^2(E;m)\).

\((\cE.1)\) The lower boundedness : There exists \(\alpha_0\geq 0\) such that \(\cE_{\alpha_0}(u,u)\geq 0\) for any \(u\in \cF,\) where \(\cE_{\alpha}(u,v)=\cE(u,v)+\alpha(u,v)\) for \(u,v\in \cF\) and \(\alpha \geq 0.\)

\((\cE.2)\) The sector condition : There exists \(K\geq 1\) such that 
\[|\cE(u,v)|\leq K
\sqrt{\cE_{\alpha_0}(u,u)} \sqrt{\cE_{\alpha_0}(v,v)} \text{\ for\ any\ } u,v\in \cF.\]

\((\cE.3)\) The closedness : \(\cF\) is a Hilbert space relative to the inner product \[\cE_{\alpha}^{(s)}(u,v)=\frac{1}{2}(\cE_{\alpha}(u,u)+\cE_{\alpha}(v,u)) \text{\ for\ any\ }\alpha>\alpha_0.\]

\((\cE.4)\) The Markov property : For any \(u\in \cF\) and \(a\geq 0,\) it holds that \(u\wedge a \in \cF\) and \(\cE(u\wedge a, u-u\wedge a)\geq 0.\)

By \((\cE.2)\), for any \(\alpha>\alpha_0,\) there exists a constant \(K_{\alpha}>0\)  depending on \(\alpha\) such that 
\begin{equation}
|\cE_{\alpha}(u,v)|\leq K_{\alpha}\sqrt{\cE_{\alpha}(u,u)} \sqrt{\cE_{\alpha}(v,v)} \text{\ for\ any\ } u,v\in \cF.
\label{eq:sector}\end{equation}
A function \(u\in \cF\) is called \(\alpha\)-potential if \(\cE_{\alpha}(u,v)=\int v d\mu\) for any \(v\in C_c(E)\cap \cF\) for some positive Radon measure \(\mu.\) Here and throughout this paper, \(C_c(E)\) denotes the family of all continuous functions with compact supports. A lower-bounded semi-Dirichlet form \((\cE, \cF)\) on \(L^2(E;m)\) is regular if \(C_c(E)\cap \cF\) is \(\cE_{\alpha_0}\)-dense in \(\cF\) and uniformly dense in \(C_c(E)\). For a regular lower-bounded semi-Dirichlet form \((\cE, \cF)\) on \(L^2(E;m)\), by \cite[Theorem 3.3.4]{O}, there exists a Hunt process \(X=(\{X_t\}_{t\geq 0}, \{\mathbb{P}_x\}_{x\in E})\) on \(E\) properly associated with \((\cE, \cF)\). Here, a Hunt process is a strong Markov process possessing quasi-left continuity. We remark that this process \(X\) is not always \(m\)-symmetric. See \cite{O} for details.

Moreover, we define quasi notations. For an open set \(A\), if \(\mathscr{L}_A^{\alpha}:=\{u\in \cF : u\geq 1 \text{\  on\  }A,\ m\text{-a.e.}\}\) is no-empty, there exists \(e_A^{\alpha} \in \cF\) (resp. \(\hat{e}_A^{\alpha}\in \cF\)) such that \(\cE_{\alpha}(e_A^{\alpha}, u-e_A^{\alpha})\geq 0\) (resp. \(\cE_{\alpha}(u-\hat{e}_A^{\alpha}, \hat{e}_A^{\alpha})\geq 0\)) for \(u\in \mathscr{L}_A^{\alpha}\) by \cite[p. 43]{O}. In this paper, we use \(:=\) as a way of definition. Define the \(\alpha\)-capacity \(\text{Cap}^{(\alpha)}(A)\) of an open set \(A\) by 
\begin{equation*}
\text{Cap}^{(\alpha)}(A):= \left \{ \begin{split} \cE_{\alpha}(e_A^{\alpha}, \hat{e}_A^{\alpha}) &\text{\ if\ }&\mathscr{L}_A^{\alpha}\neq \emptyset,\\
\infty  \hspace{5mm}&\text{\ if\ }&\mathscr{L}_A^{\alpha}= \emptyset,
\end{split} \right.
\end{equation*}
and, define \(\text{Cap}^{(\alpha)}(B):=\inf\{\text{Cap}^{(\alpha)}(A) : A \text{\ is\ open,\ }B \subset A\}\) for any \(B\subset E\). Using this capacity, we say the statement holds quasi-everywhere (q.e.) on \(A\) if this statement holds for any \(x\in A \setminus N\) for some \(N\) with \(\text{Cap}^{(\alpha)}(N)=0.\) Moreover, we say the function \(u\) is quasi-continuous if, for any \(\varepsilon >0,\) there exists an open set \(A\) such that \(\text{Cap}^{(\alpha)}(A)<\varepsilon\) and the restriction of \(u\) on \(A^c\) is continuous. For hitting times \(\sigma_N:=\inf\{t>0:X_t\in N\}\) and \(\hat{\sigma}_N:=\inf\{t>0:X_{t-}\in N\}\), \(N\) is properly exceptional if \(N\) is an \(m\)-negligible set and \(\mathbb{P}_x(\sigma_N \wedge \hat{\sigma}_N <\infty)=0\) for any \(x\in N^c.\) Here and throughout this paper, for real numbers \(a\) and \(b,\) we use \(a \wedge b\) and \(a \vee b\) as the meaning of the maximum and the minimum of \(a\) and \(b\), respectively.

We state the main results. We assume \((\cE, \cF)\) is a regular lower bounded semi-Dirichlet form on \(L^2(E)\) and let \(X=(\{X_t\}_t, \{\mathbb{P}_x\}_{x\in E})\) be the Markov process associated with \((\cE, \cF)\). The following theorem is proved by Nagai \cite{N} in the case that \((\cE,\cF)\) is a symmetric Dirichlet form, and essentially by Zhang \cite{Zh} in the case that \((\cE,\cF)\) is a semi-Dirichlet form. 
\begin{theorem}\label{Zhang}
For any \(g\in \cF,\) there exists \(w\in \cF\) satisfying \(g\leq w\ m\)-a.e. such that the following variational inequality holds;
\begin{equation}
\cE_{\alpha}(w,u-w)\geq 0\ \text{for\ any\ }u\in \cF\ \text{satisfying}\  g\leq u\ m\text{-a.e.}
\label{eq:Zhangvarineq}\end{equation}
and, for the quasi-continuous version \(\tilde{w}\) of \(w\), there exists a properly exceptional set \(N\) such that
\begin{equation}
\tilde{w}(x)=\sup_{\sigma}\mathbb{E}_x[e^{-\alpha \sigma}\tilde{g}(X_{\sigma})]=\mathbb{E}_x[e^{-\alpha \hat{\sigma}}\tilde{g}(X_{\hat{\sigma}})],\ \text{for\ any\ }x\in N^c,
\label{eq:Zhangsaddle}\end{equation}
where \(\tilde{g}\) is the quasi-continuous version of \(g\), the supremum is taken over the set of all stopping times for \(X,\) and \(\hat{\sigma}:=\inf \{t\geq 0; \tilde{w}(X_t)=\tilde{g}(X_t)\}\). Moreover, \(w\) is the smallest \(\alpha\)-potential majorizing \(g\ m\)-a.e. in the sense that \(w\leq \check{w}\) \(m\)-a.e. for any \(\alpha\)-potential \(\check{w} \in \cF\) with \(g\leq \check{w}\ m\)-a.e.
\end{theorem}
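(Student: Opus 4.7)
The plan is to combine the non-symmetric Lions--Stampacchia theorem with the probabilistic potential theory for semi-Dirichlet forms developed in \cite{O}, proceeding in four steps. \emph{Step 1 (Existence).} Fix $\alpha>\alpha_0$ and set $\mathcal{K}:=\{u\in\cF:u\geq g\ m\text{-a.e.}\}$. Since $(\cF,\cE_\alpha^{(s)})$ is a Hilbert space by $(\cE.3)$ whose topology dominates $L^2$-convergence thanks to $(\cE.1)$, the set $\mathcal{K}$ is non-empty, convex and closed. The form $\cE_\alpha$ is continuous on $\cF\times\cF$ by \eqref{eq:sector} and coercive since $\cE_\alpha(u,u)=\cE_\alpha^{(s)}(u,u)$ for $\alpha>\alpha_0$, so the Lions--Stampacchia theorem for continuous coercive (non-symmetric) bilinear forms produces a unique $w\in\mathcal{K}$ satisfying \eqref{eq:Zhangvarineq}.

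\emph{Step 2 ($w$ as an $\alpha$-potential, and minimality).} Testing \eqref{eq:Zhangvarineq} against $u=w+v$ for non-negative $v\in C_c(E)\cap\cF$ gives $\cE_\alpha(w,v)\geq 0$, so by regularity and a Riesz-representation argument $w$ is an $\alpha$-potential, say $\cE_\alpha(w,\cdot)=\int\cdot\,d\mu$. A standard obstacle-problem test-function argument (inserting $u=w-\varepsilon\phi(w-g)\in\mathcal{K}$ for small $\varepsilon>0$ and $\phi\in C_c(E)\cap\cF$, $\phi\geq 0$) then forces $\mu$ to be carried by $\{\tilde w=\tilde g\}$. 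For the minimality clause, let $\check w\in\cF$ be any $\alpha$-potential with $\check w\geq g$ m-a.e., and set $u:=w-\check w$, $v:=u^+$. Since $w-v=w\wedge\check w\in\mathcal{K}$, \eqref{eq:Zhangvarineq} yields $\cE_\alpha(w,v)\leq 0$, while $\cE_\alpha(\check w,v)\geq 0$ because $\check w$ is an $\alpha$-potential; subtracting gives $\cE_\alpha(u,u^+)\leq 0$. Applying $(\cE.4)$ to $u$ with $a=0$ yields $\cE(u^-,u^+)\leq 0$, so $\cE_\alpha(u^+,u^+)\leq\cE_\alpha(u,u^+)\leq 0$, and coercivity forces $u^+=0$, i.e.\ $w\leq\check w$ m-a.e.

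\emph{Step 3 (Probabilistic representation).} Every $\alpha$-potential is $\alpha$-excessive, so for each stopping time $\sigma$ the optional-sampling inequality gives
\[\tilde w(x)\geq\mathbb{E}_x[e^{-\alpha\sigma}\tilde w(X_\sigma)]\geq\mathbb{E}_x[e^{-\alpha\sigma}\tilde g(X_\sigma)]\quad\text{q.e.,}\]
yielding $\tilde w(x)\geq\sup_\sigma\mathbb{E}_x[e^{-\alpha\sigma}\tilde g(X_\sigma)]$. For the matching lower bound and the attainment at $\hat\sigma$, the Revuz-type correspondence for semi-Dirichlet forms in \cite{O} produces a PCAF $A^\mu$ of $X$ with $\tilde w(x)=\mathbb{E}_x\bigl[\int_0^\infty e^{-\alpha t}\,dA_t^\mu\bigr]$ q.e. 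Because $\mu$ is concentrated on $\{\tilde w=\tilde g\}$ by Step~2 and $\tilde w(X_t)>\tilde g(X_t)$ for $t<\hat\sigma$, the PCAF $A^\mu$ does not grow on $[0,\hat\sigma)$; the strong Markov property then gives $\tilde w(x)=\mathbb{E}_x[e^{-\alpha\hat\sigma}\tilde w(X_{\hat\sigma})]=\mathbb{E}_x[e^{-\alpha\hat\sigma}\tilde g(X_{\hat\sigma})]$, with the last equality using $\tilde w(X_{\hat\sigma})=\tilde g(X_{\hat\sigma})$, which follows from quasi-continuity and the definition of $\hat\sigma$.

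The main obstacle will be Step~3: a Hilbert-space identity for $w$ must be lifted to a q.e.\ pointwise identity along trajectories, and the Revuz--PCAF correspondence in the semi-Dirichlet setting is more delicate than in the symmetric regular case because the dual semigroup need not be associated with a Hunt process. Careful bookkeeping with quasi-continuous versions and properly exceptional sets is needed both to identify $\tilde w$ with the $\alpha$-potential of $\mu$ and to translate the $\{\tilde w=\tilde g\}$-support property of $\mu$ into the correct growth behaviour of $A^\mu$, so that the strong Markov computation holds outside a single properly exceptional set $N$ valid for all $x\in N^c$.
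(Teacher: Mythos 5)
Your outline is essentially correct, but it takes a genuinely different route from the paper. The paper does not prove the variational inequality or the stopping-time representation from scratch: it reduces everything to Zhang's Theorem 3 in \cite{Zh} by the sign change \(G=-g\), \(F=-f\) (Proposition \ref{Zhangprop}), extracts the variational inequality from Zhang's penalized approximations \(\Phi_\varepsilon\) satisfying \(\cE_{\alpha}(\Phi_{\varepsilon}, \phi)=(F-\tfrac{1}{\varepsilon}(\Phi_{\varepsilon}-G)\vee 0, \phi)\), and inherits the identity \eqref{eq:Zhangsaddle} directly from Zhang's optimal-stopping result; minimality is then a short lattice argument using that \(w\wedge\check w\) is again an \(\alpha\)-potential (\cite[Corollary 2.3.2]{O}). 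You instead build the solution via Lions--Stampacchia, identify the support of the representing measure \(\mu\), and obtain \eqref{eq:Zhangsaddle} through the Revuz/PCAF representation \(\tilde w(x)=\mathbb{E}_x[\int_0^\infty e^{-\alpha t}dA_t^\mu]\) --- the classical Nagai route. What your approach buys is self-containedness and the extra structural fact that \(\mu\) lives on the contact set \(\{\tilde w=\tilde g\}\); your minimality argument via \((\cE.4)\) with \(a=0\) is also a clean substitute for the lattice property of potentials. What the paper's approach buys is brevity and the avoidance of your Step~3, which is indeed the delicate point: you must invoke the Revuz correspondence for semi-Dirichlet forms from \cite{O}, and you should also truncate in Step~2, since \(\phi(w-g)\) need not lie in \(\cF\) when \(w-g\) is unbounded (replace \(w-g\) by \((w-g)\wedge n\) and pass to the limit). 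With those two points filled in, your argument goes through.
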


For any stopping times \(\tau\) and \(\sigma\) for \(X\), we define
\[J_x(\tau,\sigma):=\mathbb{E}_x[e^{-\alpha(\tau \wedge \sigma)}({\bf 1}_{\{\tau \leq \sigma\}}\tilde{h}(X_t)+{\bf 1}_{\{\tau > \sigma\}}\tilde{g}(X_t))],\]
where \(\tilde{g}\) and \(\tilde{h}\) are the quasi-continuous versions of \(g\) and \(h\), respectively. We remark that every function in \(\cF\) has the quasi-continuous version.

Then we state the main theorem concerning the variational inequality with two obstacles for a regular lower-bounded semi-Dirichlet form \((\cE, \cF)\) and the Dynkin game for the Markov process associated with \((\cE, \cF)\).

\begin{theorem}\label{mainThm}
There exists \(v\in \cF\) satisfying \(g\leq v\leq h\ m\)-a.e. such that the following variational inequalities holds;

\begin{equation}
\cE_{\alpha}(v,u-v)\geq 0\ \text{for\ any\ }u\in \cF\ \text{satisfying}\  g\leq u\leq h\ m\text{-a.e.}
\label{eq:varineq}\end{equation}
and, for the quasi-continuous version \(\tilde{v}\) of \(v\), there exists a properly exceptional set \(N\) such that
\begin{equation}
\tilde{v}(x)=\sup_{\sigma}\inf_{\tau}J_x(\tau, \sigma) = \inf_{\tau}\sup_{\sigma}J_x(\tau, \sigma) \text{\ for\ }x\in N^c,
\label{eq:supinf}\end{equation}
where \(\tilde{v}\) is the quasi-continuous version of \(v\), the supremum and the infimum are taken over the set of all stopping times for \(X.\)

Moreover, stopping times \(\hat{\tau}:=\inf \{t\geq 0; \tilde{v}(X_t)=\tilde{h}(X_t)\}\) and \(\hat{\sigma}:=\inf \{t\geq 0; \tilde{v}(X_t)=\tilde{g}(X_t)\}\) are the saddle point of the game in the sense that
\begin{equation}
\tilde{v}(x)=J_x(\hat{\tau}, \hat{\sigma})\ \text{and\ }J_x(\hat{\tau}, \sigma)\leq J_x(\hat{\tau}, \hat{\sigma}) \leq J_x(\tau, \hat{\sigma})
\label{eq:saddle}\end{equation}
hold for any stopping times \(\tau\) and \(\sigma\), and \(x\in N^c.\)
\end{theorem}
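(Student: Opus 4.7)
The plan is to prove Theorem~\ref{mainThm} in three steps: establish existence of $v$ by a variational argument, identify $v$ as a limit of one-obstacle approximants so as to transfer the probabilistic formula of Theorem~\ref{Zhang}, and derive the saddle-point identities from \eqref{eq:varineq} via optional sampling.

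\textbf{Existence of $v$.} Fix $\alpha > \alpha_0$ and consider $K := \{u \in \cF : g \le u \le h\ m\text{-a.e.}\}$, which is convex, $\cE_\alpha^{(s)}$-closed, and non-empty (since $g,h \in \cF$ with $g \le h$). On the Hilbert space $(\cF, \cE_\alpha^{(s)})$ the form $\cE_\alpha$ is coercive by $(\cE.1)$ and continuous by \eqref{eq:sector}. The Lions--Stampacchia theorem then produces a unique $v \in K$ obeying \eqref{eq:varineq}; uniqueness follows from coercivity by testing with $v_1,v_2$ against each other.

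\textbf{Probabilistic representation.} Starting from $v_0 := g$, I would iterate two one-obstacle constructions: let $v_{2n+1}$ be obtained by applying Theorem~\ref{Zhang} to $(\cE,\cF)$ with lower obstacle $v_{2n}$ and then truncating by $h$, and let $v_{2n+2}$ be obtained by applying the analogue of Theorem~\ref{Zhang} to the co-form $\check{\cE}(u,v) := \cE(v,u)$ (which again satisfies $(\cE.1)$--$(\cE.4)$ by \eqref{eq:sector}) with upper obstacle $v_{2n+1}$ and then raising above $g$. The iteration is monotone and uniformly $\cE_\alpha$-bounded, so it converges in $\cF$ to an element of $K$ solving \eqref{eq:varineq}; by uniqueness this limit equals $v$. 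Each iterate inherits a Snell-envelope representation from Theorem~\ref{Zhang}; passing to the quasi-continuous limit and using quasi-left continuity of $X$ and of the co-process of $\check{\cE}$ yields the probabilistic identity for $\tilde v$ off a common properly exceptional set $N$.

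\textbf{Saddle property and \eqref{eq:supinf}.} The variational inequality \eqref{eq:varineq} forces $\tilde v$ to be $\cE_\alpha$-harmonic on the continuation region $\{g < \tilde v < \tilde h\}$: for $\phi \in \cF$ with quasi-support there, $v \pm \varepsilon \phi \in K$ for small $\varepsilon > 0$, so $\cE_\alpha(v,\phi) = 0$. By a Fukushima-type decomposition for semi-Dirichlet forms, $e^{-\alpha t}\tilde v(X_t)$ is a martingale on $[0, \hat\tau \wedge \hat\sigma)$. Optional sampling at $\hat\tau \wedge \hat\sigma$, combined with $\tilde v(X_{\hat\tau}) = \tilde h(X_{\hat\tau})$ on $\{\hat\tau \le \hat\sigma\}$ and $\tilde v(X_{\hat\sigma}) = \tilde g(X_{\hat\sigma})$ on the complement, yields $\tilde v(x) = J_x(\hat\tau, \hat\sigma)$. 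Stopping instead at $\hat\tau \wedge \sigma$ and $\tau \wedge \hat\sigma$ and using $\tilde g \le \tilde v \le \tilde h$ q.e.\ gives the two inequalities in \eqref{eq:saddle}, from which \eqref{eq:supinf} is a formal consequence.

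\textbf{Main obstacle.} The hardest step is the iteration in Step~2: without the self-duality of a symmetric Dirichlet form, the one-obstacle argument must be carried through simultaneously for $(\cE,\cF)$ and its co-form, and one must assemble a single properly exceptional set off which all hitting-time identities hold uniformly over stopping times. Coordinating the exceptional sets for $X$ and for the co-process is expected to be where most of the technical effort lies.
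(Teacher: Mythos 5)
Your Step 1 is fine and matches the paper (which invokes \cite[Theorem 1.1.1]{O}, i.e.\ Stampacchia, for the existence and uniqueness of the solution to \eqref{eq:varineq}). The problems are in Step 2, and they are structural. First, the ``co-form'' $\check{\cE}(u,v):=\cE(v,u)$ of a semi-Dirichlet form is in general \emph{not} a semi-Dirichlet form: the sector condition and closedness transfer, but the Markov property $(\cE.4)$ for the dual is exactly the extra axiom $(\hat{\cE}.4)$ that distinguishes non-symmetric Dirichlet forms from semi-Dirichlet forms, and it can fail here. So Theorem \ref{Zhang} has no analogue for $\check{\cE}$ that you may invoke; and even if it did, the resulting Snell envelope would be expressed through the dual process, whereas $J_x(\tau,\sigma)$ is defined entirely through $X$. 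The paper avoids this by writing $v=\overline{v}-\underline{v}$ with \emph{both} $\overline{v}$ and $\underline{v}$ solutions of lower-obstacle problems for the \emph{same} form $\cE$ (obstacles $\underline{v}+g$ and $\overline{v}-h$ respectively), which requires the separability condition ($g\le w_1-w_2\le h$ for two $\alpha$-potentials $w_1,w_2$) to make the alternating iteration monotone and bounded; the general case is then reached by approximating $(g,h)$ by separable pairs and controlling $|v_n-v|$ via the one-obstacle value functions $H_n, G_n$. Your proposal skips the separability condition entirely, and your ``truncate by $h$ / raise above $g$'' steps destroy the $\alpha$-potential property and the Snell-envelope representation, so the iterates no longer inherit anything from Theorem \ref{Zhang}.

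Second, your assertion that the iteration ``is monotone and uniformly $\cE_\alpha$-bounded, so it converges in $\cF$'' is precisely the step that fails without symmetry, and it is the main technical point of the paper. For a symmetric Dirichlet form, a bounded increasing sequence of $\alpha$-potentials converges in the form norm because each potential is also a copotential; for a semi-Dirichlet form this breaks down. The paper replaces norm convergence of the sequence by Banach--Saks: one extracts a subsequence whose Ces\`{a}ro means converge in $\cE_\alpha$, and then must re-verify (by the double-sum computation with $\sum_{k<j}\cE_\alpha(\overline{v}_{n_k}-\overline{v}_{n_j},\overline{v}_{n_k}-\overline{v}_{n_j})\ge 0$) that the Ces\`{a}ro means still satisfy the variational inequalities in the limit. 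The same device is needed again in the general (non-separable) case. Your Step 3 is closer in spirit to the paper's argument (the paper derives the saddle property from the supermartingale property of $e^{-\alpha t}\overline{v}(X_t)$ and $e^{-\alpha t}\underline{v}(X_t)$ plus optional sampling, rather than from harmonicity on the continuation region), but as written it rests on the unproven representation from Step 2, so the proof as a whole has a genuine gap.
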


The above theorem is a generalization of \cite{Z} to the case of semi-Dirichlet forms. We prove Theorem \ref{mainThm} in a similar way to those of \cite{Z} and \cite{FM}. The differences from the proof of \cite{Z, FM} are the convergences in \(\cE_{\alpha}\) of \(\alpha\)-potentials. In the symmetric case, bounded increasing \(\alpha\)-potentials converge to some \(\alpha\)-potential since an \(\alpha\)-potential is also an \(\alpha\)-copotential. However, in the case of a semi-Dirichlet form, an \(\alpha\)-potential is not always an \(\alpha\)-copotential, so we cannot use the exactly same proof as that of \cite{Z,FM}. Instead of these convergences, by applying Banach-Saks's theorem, we use the convergence of the Ces\`{a}ro mean of a subsequence.

The rest of this paper is organized as follows. In section \ref{Proof of Theorem 1.1}, we prove that Theorem \ref{Zhang} follows essentially from \cite[Theprem 3]{Zh}. In section \ref{Proof of Theorem 1.2}, we prove Theorem \ref{mainThm} in a similar way to \cite{Z, FM} except for the convergences in \(\cE_{\alpha}\) for \(\alpha\)-potentials since this convergence follows from the symmetry of the process. Instead of these convergences, we mainly use Banach Saks's theorem. Section \ref{examples} presents examples.

\section{Proof of Theorem 1.1}\label{Proof of Theorem 1.1}
Let \((\cE,\cF)\) be a regular lower-bounded semi-Dirichlet forms on \(L^2(E;m)\) and \(X\) be a Hunt process properly associated with \((\cE,\cF)\). Fix \(\alpha >\alpha_0.\) In this section, we prove that Theorem \ref{Zhang} essentially follows from \cite[Theorem 3]{Zh}, which is the relation between the variational inequality with one obstacle for a non-negative definite semi-Dirichlet form and the optimal stopping game for the associated Markov process. Since every function in \(\cF\) has the quasi-continuous version, we assume that all elements in \(\cF\) are quasi-continuous without loss of generality.

For reader's convenience, we describe \cite[Theorem 3]{Zh}.
\begin{theorem}{\((\)\cite[Theorem 3]{Zh}\()\)}\label{OriginalZhang}
For any \(G\in \cF\) and \(F\in L^2(E;m)\), we define
\[\Phi(x):=\inf_{\tau} \mathbb{E}_x\left[e^{-\alpha \tau} G(X_{\tau})+\int_0^{\tau}e^{-\alpha s}F(X_s)ds\right].\]
Then, for the stopping time \(\tau^*:=\inf\{t\geq 0 ; \Phi(X_t)\geq G(X_t)\},\) it holds that
\[\Phi(x)=\mathbb{E}_x\left[e^{-\alpha \tau^*} G(X_{\tau^*})+\int_0^{\tau^*}e^{-\alpha s}F(X_s)ds\right]\] for q.e. \(x\), and \(\Phi \) is the maximum solution to the following inequalities ;
\[\cE_{\alpha}(\Phi,\phi)\leq (F, \phi)\ \text{for\ }\phi\in \cF\ \text{with\ }\phi\geq 0 \text{ and\ }\Phi \leq G\ m\text{-a.e}.\]
\end{theorem}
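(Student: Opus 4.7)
The plan is to construct a solution to the obstacle problem by penalization, identify it probabilistically with the optimal stopping value $\Phi$, and then verify its maximality among subsolutions bounded above by $G$. Throughout, we exploit the coercivity of $\cE_{\alpha}$ for $\alpha>\alpha_0$ and the sector condition \eqref{eq:sector}, replacing monotone $\cE_{\alpha}$-convergence (unavailable in the semi-Dirichlet setting) by Banach--Saks arguments on Ces\`aro means.

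\textbf{Step 1 (penalized scheme).} For each $n\geq 1$, use the Lax--Milgram / Stampacchia framework to find the unique $\Phi_n\in\cF$ with
\[
\cE_{\alpha}(\Phi_n,\phi)+n\bigl((\Phi_n-G)^+,\phi\bigr)=(F,\phi),\qquad\phi\in\cF.
\]
Testing against $\Phi_n$ (and against $\Phi_n-G$) gives the uniform bound $\sup_n\cE_{\alpha}^{(s)}(\Phi_n,\Phi_n)<\infty$ together with $\|(\Phi_n-G)^+\|^2=O(1/n)$. Extracting a weakly convergent subsequence $\Phi_{n_k}\rightharpoonup\Phi$ in $\cF$ and applying Banach--Saks, one obtains a Ces\`aro mean that converges to $\Phi$ strongly in $\cE_{\alpha}^{(s)}$ and $m$-a.e.; the $L^2$-control of the penalty forces $\Phi\leq G$ $m$-a.e. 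Passing to the limit against any admissible $\phi\geq 0$ yields $\cE_{\alpha}(\Phi,\phi)\leq (F,\phi)$.

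\textbf{Step 2 (probabilistic identification).} Let $\tilde\Phi$ denote the quasi-continuous version of $\Phi$. Show that
\[
M_t:=e^{-\alpha t}\tilde\Phi(X_t)+\int_0^t e^{-\alpha s}F(X_s)\,ds
\]
is a $\mathbb{P}_x$-submartingale for q.e. $x$: this uses the variational inequality $\cE_{\alpha}(\Phi,\phi)\leq (F,\phi)$ together with the Fukushima-type decomposition of $e^{-\alpha t}\tilde\Phi(X_t)$ available under regular semi-Dirichlet forms, and the standard Revuz correspondence to read the sign of the drift AF from the sign of $\alpha\Phi-F$ on $\{\Phi<G\}$ and the obstacle condition on $\{\Phi=G\}$. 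Optional stopping gives $\tilde\Phi(x)\leq\mathbb{E}_x[e^{-\alpha\tau}\tilde G(X_{\tau})+\int_0^{\tau}e^{-\alpha s}F(X_s)\,ds]$ for every stopping time $\tau$ (using $\tilde\Phi\leq\tilde G$), so $\tilde\Phi\leq\Phi^{\mathrm{val}}$, the right-hand side of the claimed representation. For the reverse inequality, show that $M_{t\wedge\tau^*}$ is a true martingale --- equality in the variational inequality holds on $\{\Phi<G\}$, the continuation region, which is exactly the region where the process runs before $\tau^*$ --- so $\tilde\Phi(x)=\mathbb{E}_x[M_{\tau^*}]$. Quasi-continuity and the definition of $\tau^*$ give $\tilde\Phi(X_{\tau^*})=\tilde G(X_{\tau^*})$ q.e., completing the identification and showing that $\tau^*$ realizes the infimum.

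\textbf{Step 3 (maximality).} Suppose $\Psi\in\cF$ with $\Psi\leq G$ $m$-a.e. satisfies $\cE_{\alpha}(\Psi,\phi)\leq (F,\phi)$ for all $\phi\in\cF$ with $\phi\geq 0$. The function $\phi:=(\Psi-\Phi)^+\in\cF$ is non-negative, and $\Phi+\phi=\Phi\vee\Psi\leq G$ $m$-a.e., so $\phi$ is admissible in both inequalities. Subtracting the inequality for $\Psi$ from that for $\Phi$ applied against this common test and invoking $(\cE.4)$ and coercivity yields $\cE_{\alpha}^{(s)}(\phi,\phi)\leq 0$, hence $\phi=0$ and $\Psi\leq\Phi$ $m$-a.e.

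\textbf{Main obstacle.} The delicate point is Step 2: in the symmetric case one identifies $\Phi$ with the optimal stopping value by monotonically approximating via increasing $\alpha$-potentials that converge in $\cE_{\alpha}$. For a non-symmetric semi-Dirichlet form, $\alpha$-potentials are not $\alpha$-copotentials, so this monotone convergence fails; the Banach--Saks reduction from Step 1, together with the Fukushima decomposition and Revuz correspondence available in the non-symmetric framework of \cite{O}, is precisely what compensates for this loss.
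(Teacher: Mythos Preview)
The paper does not prove this theorem at all: Theorem~\ref{OriginalZhang} is quoted from \cite[Theorem~3]{Zh} ``for reader's convenience'' and then used as a black box to obtain Proposition~\ref{Zhangprop} and Lemma~\ref{Zhanglem}. So there is no in-paper proof to compare against.

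That said, the proof of Proposition~\ref{Zhangprop} reveals the skeleton of Zhang's argument, and your Step~1 matches it: Zhang builds penalized approximants $\Phi_\varepsilon$ satisfying $\cE_\alpha(\Phi_\varepsilon,\phi)=\bigl(F-\tfrac{1}{\varepsilon}(\Phi_\varepsilon-G)^+,\phi\bigr)$ for all $\phi\in\cF$. But the paper explicitly records that in Zhang's proof $\Phi_\varepsilon\searrow\Phi$ $m$-a.e.\ \emph{and in $\cE_\alpha$}. Hence the strong monotone $\cE_\alpha$-convergence you declare ``unavailable in the semi-Dirichlet setting'' is in fact available for this particular penalized family, and your Banach--Saks detour is unnecessary here. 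The Banach--Saks device is the paper's tool for its \emph{new} results in Section~\ref{Proof of Theorem 1.2}, where the approximating sequences are $\alpha$-potentials produced by Theorem~\ref{Zhang} rather than penalized solutions, and there the monotone argument genuinely fails; you have transplanted that difficulty into a theorem where it does not arise.

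There is also a gap in your Step~3. From Step~1 you only recorded the subsolution inequality $\cE_\alpha(\Phi,\phi)\le(F,\phi)$ for $\phi\ge 0$; subtracting this from the identical inequality for $\Psi$ yields no sign on $\cE_\alpha(\Psi-\Phi,\phi)$. What is needed is the full variational inequality $\cE_\alpha(\Phi,v-\Phi)\ge(F,v-\Phi)$ for all $v\in\cF$ with $v\le G$, which the penalization does deliver but which you did not state. Testing that against $v=\Phi\vee\Psi=\Phi+(\Psi-\Phi)^+$ gives $\cE_\alpha(\Phi,(\Psi-\Phi)^+)\ge(F,(\Psi-\Phi)^+)$; combined with the subsolution inequality for $\Psi$ and $(\cE.4)$ one gets $\cE_\alpha\bigl((\Psi-\Phi)^+,(\Psi-\Phi)^+\bigr)\le 0$. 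Alternatively, maximality is an immediate by-product of your Step~2: any $\Psi\le G$ with $\cE_\alpha(\Psi,\phi)\le(F,\phi)$ for $\phi\ge 0$ makes $e^{-\alpha t}\Psi(X_t)+\int_0^t e^{-\alpha s}F(X_s)\,ds$ a submartingale, so optional stopping and $\Psi\le G$ give $\Psi(x)\le\inf_\tau\mathbb{E}_x[\,\cdots\,]=\Phi(x)$.
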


The following proposition follows from Theorem \ref{OriginalZhang} immediately.
\begin{proposition}
For any \(g\in \cF\) and \(f\in L^2(E;m)\), there exists \(w\in \cF\) satisfying \(w\geq g\) \(m\)-a.e. such that, \[\cE_{\alpha}(w,u-w)\geq (f,u-w)\ \text{for\ }u\in \cF\ \text{with\ }u\geq g \ m\text{-a.e}.\]
Moreover, there exists a properly exceptional set \(N\) such that
\begin{eqnarray}
\nonumber w(x)&=&\sup_{\tau} \mathbb{E}_x\left[e^{-\alpha \tau} g(X_{\tau})+\int_0^{\tau}e^{-\alpha s}f(X_s)ds\right]\\
&=&\mathbb{E}_x\left[e^{-\alpha \hat{\tau}} g(X_{\hat{\tau}})+\int_0^{\hat{\tau}}e^{-\alpha s}f(X_s)ds\right] \label{eq:OZhang}
\end{eqnarray}
for \(x\in N^c.\)
\label{Zhangprop}\end{proposition}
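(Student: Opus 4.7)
My plan is to deduce Proposition~\ref{Zhangprop} from Theorem~\ref{OriginalZhang} by the sign-change substitution $G:=-g$ and $F:=-f$, which converts the infimum/upper-obstacle problem of Theorem~\ref{OriginalZhang} into the supremum/lower-obstacle problem stated here. Since $\cF$ is a linear subspace of $L^2(E;m)$ and $f\in L^2(E;m)$, the pair $(-g,-f)$ is an admissible input for Theorem~\ref{OriginalZhang}; applying it produces $\Phi\in\cF$ together with the stopping time $\tau^*=\inf\{t\geq 0:\Phi(X_t)\geq G(X_t)\}$. I then set $w:=-\Phi$ and $\hat{\tau}:=\tau^*$.

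The obstacle inequality $\Phi\leq -g$ immediately becomes $w\geq g$ $m$-a.e., and the identity $-\inf_\tau\mathbb{E}_x[-Y_\tau]=\sup_\tau\mathbb{E}_x[Y_\tau]$ transforms the stochastic representation of $\Phi$ in Theorem~\ref{OriginalZhang} into (\ref{eq:OZhang}) for $w$. Moreover $\hat{\tau}=\inf\{t:\Phi(X_t)\geq G(X_t)\}=\inf\{t:w(X_t)\leq g(X_t)\}$, which equals $\inf\{t:w(X_t)=g(X_t)\}$ after passing to quasi-continuous versions with $w\geq g$ outside a properly exceptional set. For the variational inequality, negating $\cE_\alpha(\Phi,\phi)\leq -(f,\phi)$ (valid for every $\phi\in\cF$ with $\phi\geq 0$) yields $\cE_\alpha(w,\phi)\geq (f,\phi)$ for all such $\phi$, so $w$ is a supersolution; and the maximality of $\Phi$ in Theorem~\ref{OriginalZhang} translates into minimality of $w$ among supersolutions majorizing $g$ $m$-a.e. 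Given $u\in\cF$ with $u\geq g$ $m$-a.e., I split $u-w=(u-w)^+-(w-u)^+$: the supersolution inequality handles the first piece directly, and for the second one uses that $u\wedge w\in\cF$ (by the Markov property $(\cE.4)$) with $u\wedge w\geq g$ $m$-a.e., combined with the minimality of $w$ to force $\cE_\alpha(w,(w-u)^+)\leq (f,(w-u)^+)$.

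The main obstacle is this last step: converting the maximum-subsolution (equivalently, minimum-supersolution) characterization inherited from Theorem~\ref{OriginalZhang} into the full variational inequality of Proposition~\ref{Zhangprop}. This equivalence is classical in the symmetric Dirichlet-form setting via the convex-minimization viewpoint, and extends to regular semi-Dirichlet forms using the Markov property $(\cE.4)$ to cut test functions against $w$, but it is not a purely algebraic manipulation. A secondary and routine point is the upgrade from the q.e.\ validity in Theorem~\ref{OriginalZhang} to the properly exceptional set $N$ required here; this is standard since every exceptional set for a regular semi-Dirichlet form is contained in a properly exceptional set for the associated Hunt process (cf.\ \cite{O}).
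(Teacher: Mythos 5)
Your reduction $G:=-g$, $F:=-f$, $w:=-\Phi$, and the transfer of the obstacle inequality, of the stochastic representation, and of $\hat{\tau}=\tau^*$ all match the paper. The gap is exactly where you locate it, and the mechanism you propose does not close it. To obtain $\cE_{\alpha}(w,u-w)\geq (f,u-w)$ you reduce to showing $\cE_{\alpha}(w,(w-u)^+)\leq (f,(w-u)^+)$ and claim this follows from $u\wedge w\in\cF$, $u\wedge w\geq g$ and the minimality of $w$. But minimality only says $w\leq\check{w}$ for every \emph{supersolution} $\check{w}\geq g$, and $u\wedge w$ is in general not a supersolution, so minimality gives no information about it (if it were a supersolution, minimality would yield $w\leq u\wedge w$, i.e.\ $(w-u)^+=0$, and the inequality would be vacuous). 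The implication ``minimal supersolution $\Rightarrow$ solution of the variational inequality'' is precisely the nontrivial direction here, and it cannot be extracted from the statement of Theorem \ref{OriginalZhang} by this algebraic cut.

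The paper closes the gap by going into the \emph{proof} of Theorem \ref{OriginalZhang} rather than its statement: Zhang's $\Phi$ is the limit, $m$-a.e.\ and in $\cE_{\alpha}$, of penalized solutions $\Phi_{\varepsilon}$ satisfying the exact equation $\cE_{\alpha}(\Phi_{\varepsilon},\phi)=(F-\frac{1}{\varepsilon}(\Phi_{\varepsilon}-G)\vee 0,\phi)$ for all $\phi\in\cF$. Testing with $\phi=u+\Phi_{\varepsilon}$ for $u\geq -G$ and noting that the penalty term pairs two nonnegative quantities (on the set where $(\Phi_{\varepsilon}-G)\vee 0>0$ one has $u+\Phi_{\varepsilon}\geq -G+\Phi_{\varepsilon}\geq 0$) gives $\cE_{\alpha}(\Phi_{\varepsilon},u+\Phi_{\varepsilon})\leq (F,u+\Phi_{\varepsilon})$, and the $\cE_{\alpha}$-convergence allows passage to the limit. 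If you want to avoid the penalization, the workable version of your idea is: invoke the Lions--Stampacchia existence of a variational-inequality solution $w^*$ on the convex set $\{u\in\cF: u\geq g\}$ (the paper itself uses \cite[Theorem 1.1.1]{O} for this later), prove by the standard comparison argument --- using $(\cE.4)$ with $a=0$ to get $\cE(v^-,v^+)\leq 0$ and hence $\cE_{\alpha}(v,v^+)\geq\cE_{\alpha}(v^+,v^+)>0$ unless $v^+=0$ --- that $w^*$ is \emph{the} minimal supersolution majorizing $g$, and then identify $w^*=-\Phi$ by uniqueness of the minimal element. Either way, an ingredient beyond the bare statement of Theorem \ref{OriginalZhang} is required, and your write-up does not supply it.
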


\begin{proof}
For \(G:=-g\) and \(F:=-f\), let \(\Phi\) be the maximum solution of \((\ref{eq:OZhang})\). Then \(w=-\Phi\) is the solution. By the proof of \cite[Theorem 3]{Zh}, there exist 
\(\Phi_{\varepsilon}\in \cF\) such that \(\Phi_{\varepsilon}\searrow \Phi\) \(m\)-a.e. and  in \(\cE_{\alpha}\). Moreover, \(\cE_{\alpha}(\Phi_{\varepsilon}, \phi)=(F-\frac{1}{\varepsilon}(\Phi_{\varepsilon}-G)\vee 0, \phi)\) holds for any \(\phi \in \cF\). So, for \(u\in \cF\) with \(u\geq -G\), we have 
\[\cE_{\alpha}(\Phi_{\varepsilon}, u+\Phi_{\varepsilon})=(F,u+\Phi_{\varepsilon})-\frac{1}{\varepsilon}((\Phi_{\varepsilon}-G)\vee 0, u+\Phi_{\varepsilon}).\]
If \(\Phi_{\varepsilon}\geq G\), it holds that \(u+\Phi_{\varepsilon}\geq -G+\Phi_{\varepsilon}\geq 0\), so we have \(\cE_{\alpha}(\Phi_{\varepsilon}, u+\Phi_{\varepsilon})\leq (F,u+\Phi_{\varepsilon}),\) and by letting \(\varepsilon \) to \(0\), it holds that \(\cE_{\alpha}(\Phi, u+\Phi)\leq (F,u+\Phi)\) for any \(u\geq -G.\) By considering \(g=-G, w=-\Phi\) and \(f=-F\), the proof is completed.
\end{proof}

To prove that the solution \(w\) is an \(\alpha\)-potential, we need the following lemma.

\begin{lemma}
Under the assumptions of Proposition \ref{Zhangprop},\\
\((1)\) if \(f\geq 0\), then the solution \(w\) of Proposition \ref{Zhangprop} is an \(\alpha\)-potential.\\
\((2)\) if \(f=0,\) then \(w\) is the smallest \(\alpha\)-potential majorizing \(g\ m\)-a.e.
\label{Zhanglem}\end{lemma}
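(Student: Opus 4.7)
The plan is to obtain (1) directly from the variational inequality of Proposition \ref{Zhangprop} by testing it against nonnegative perturbations of $w$, and to deduce (2) by combining (1) with the probabilistic representation in Proposition \ref{Zhangprop} and the $\alpha$-supermedian property that every $\alpha$-potential enjoys.

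For (1), I would take an arbitrary $\phi\in\cF$ with $\phi\geq 0$ $m$-a.e.\ and test the variational inequality against $u:=w+\phi$. Since $u\geq w\geq g$ $m$-a.e., Proposition \ref{Zhangprop} gives
\[
\cE_{\alpha}(w,\phi)=\cE_{\alpha}(w,u-w)\geq (f,u-w)=(f,\phi)\geq 0,
\]
the last step using $f\geq 0$ and $\phi\geq 0$. Hence $\phi\mapsto \cE_{\alpha}(w,\phi)$ is a nonnegative linear functional on the positive cone of $\cF$; restricting to $C_c(E)\cap \cF$ and invoking the Riesz--Markov representation, which in the regular semi-Dirichlet framework of \cite{O} turns exactly this positivity into representability by a positive Radon measure, produces $\mu$ with $\cE_{\alpha}(w,v)=\int v\,d\mu$ for every $v\in C_c(E)\cap \cF$, making $w$ an $\alpha$-potential.

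For (2), I would take $f=0$, so by (1) the solution $w$ is itself an $\alpha$-potential and Proposition \ref{Zhangprop} furnishes the representation $w(x)=\sup_{\tau}\mathbb{E}_x[e^{-\alpha\tau}g(X_{\tau})]$ for $x$ outside some properly exceptional set. Let $\check w\in\cF$ be any $\alpha$-potential with $g\leq \check w$ $m$-a.e. Every $\alpha$-potential is $\alpha$-excessive for $X$, so after passing to quasi-continuous versions on a common properly exceptional set, $e^{-\alpha t}\check w(X_t)$ is a right-continuous nonnegative supermartingale under $\mathbb{P}_x$; optional sampling then yields
\[
\mathbb{E}_x\bigl[e^{-\alpha\tau}\check w(X_{\tau})\bigr]\leq \check w(x)
\]
for every stopping time $\tau$. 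Since $\check w\geq g$ q.e., the right-hand side dominates $\mathbb{E}_x[e^{-\alpha\tau}g(X_{\tau})]$, and passing to the supremum over $\tau$ gives $w(x)\leq \check w(x)$ q.e., hence $m$-a.e.

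The main point requiring care is justifying the supermartingale and optional sampling step in the asymmetric setting: one needs a properly exceptional set $N$ (common to both $w$ and $\check w$) on which the quasi-continuous version of $\check w$ genuinely behaves as an $\alpha$-excessive function along the sample paths of $X$, which in turn rests on the potential-theoretic characterisation of $\alpha$-potentials via their associated positive continuous additive functional, as developed for regular semi-Dirichlet forms in \cite{O}. Once this is in place, the deduction $w\leq\check w$ reduces to routine monotonicity.
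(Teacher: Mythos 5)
Your part (1) is essentially the paper's argument: testing the variational inequality with $u=w+\phi$ gives $\cE_{\alpha}(w,\phi)\geq (f,\phi)\geq 0$ for all $\phi\in\cF$ with $\phi\geq 0$, and the paper then simply cites \cite[Theorem 1.4.1, Theorem 2.3.1]{O} for the passage from this positivity to the representing Radon measure, which is the content of your Riesz--Markov step (the extension from $C_c(E)\cap\cF$ to $C_c(E)$ needs the standard local sup-norm bound via a cutoff, but that is exactly what the cited results of \cite{O} package). Your part (2), however, takes a genuinely different route. The paper stays entirely on the variational side: it uses the lattice property that $w\wedge\check w$ is again an $\alpha$-potential (\cite[Corollary 2.3.2]{O}), whence $\cE_{\alpha}(w\wedge\check w,\,w\wedge\check w-w)\leq 0$ since $w-w\wedge\check w\geq 0$, while the variational inequality with $u=w\wedge\check w\geq g$ and $f=0$ gives $\cE_{\alpha}(w,\,w\wedge\check w-w)\geq 0$; subtracting yields $\cE_{\alpha}(w-w\wedge\check w,\,w-w\wedge\check w)\leq 0$, hence $=0$ and $w=w\wedge\check w$ $m$-a.e. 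You instead go through the probabilistic representation $w(x)=\sup_{\tau}\mathbb{E}_x[e^{-\alpha\tau}g(X_{\tau})]$, the $\alpha$-excessiveness of $\check w$ along paths, and optional sampling. Your route is sound --- the paper itself uses the same supermartingale/optional sampling machinery later (Lemma \ref{supermartingale}) --- and it even yields the comparison q.e.\ rather than merely $m$-a.e.; but it imports the identification of $\alpha$-potentials with $\alpha$-excessive functions and the management of a common properly exceptional set, whereas the paper's two-line variational computation needs only the lattice property of potentials and the coercivity of $\cE_{\alpha}$ for $\alpha>\alpha_0$, and so is more self-contained at this stage.
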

\begin{proof}
\((1)\) Suppose \(f\geq 0\). By the proof of Proposition \ref{Zhangprop} and Theorem \ref{OriginalZhang}, it hold that
\(\cE_{\alpha}(w,\phi)\geq (f,\phi)\geq 0\) for any \(\phi \in \cF\) with \(\phi\geq 0\). So \(w\) is an \(\alpha\)-potential by \cite[Theorem 1.4.1, Theorem 2.3.1]{O}.

\((2)\) Suppose \(f=0\). Then \(w\) is an \(\alpha\)-potential by \((1)\) and, for an \(\alpha\)-potential \(\check{w}\in \cF\) with \(g\leq \check{w}\ m\)-a.e., \(w \wedge \check{w}\) is also an \(\alpha\)-potential by \cite[Corollary 2.3.2]{O}. So we have \(\cE_{\alpha}(w \wedge \check{w},w \wedge \check{w}-w)\leq 0.\) Moreover, by Proposition \ref{Zhangprop}, we have
\[\cE_{\alpha}(w,w \wedge \check{w}-w)\geq (f,w \wedge \check{w}-w)= 0.\]
Thus we have \(\cE_{\alpha}(w-w\wedge \check{w}, w-w\wedge \check{w})=0\) and so \(w \wedge \check{w}=w\) holds \(m\)-a.e.
\end{proof}

\begin{proof}[Proof of Theorem \ref{Zhang}]
This follows from Proposition \ref{Zhangprop} and Lemma \ref{Zhanglem}.
\end{proof}

\section{Proof of Theorem 1.2}\label{Proof of Theorem 1.2}
In this section, we prove Theorem \ref{mainThm}. We prove \((\ref{eq:varineq})\) and \((\ref{eq:supinf})\) in a similar way to the proof of \cite{Z}. The difference is the use of the convergence of the Ces\`{a}ro mean of subsequence instead of the convergence of the approximate sequence of the solution to the variational inequality. This is because of the lack of the symmetry of \((\cF,\cE)\). So we need to check that the Ces\`{a}ro mean of subsequence also satisfy the appropriate variational inequalities. 

We prove Theorem \ref{mainThm} under the separability condition, and we use the approximation of the general case by the cases enjoying the separability conditions. As in the previous section, let \((\cE,\cF)\) be a regular lower-bounded semi-Dirichlet form on \(L^2(E;m)\) and \(X\) be the Hunt process associated with \((\cE,\cF)\). Without loss of generality, we may assume all functions in \(\cF\) are quasi-continuous.

\subsection{The separability condition}
For \(g,h\in \cF\) with \(g\leq h\) \(m\)-a.e., we recall the definition of the separability condition from \cite{Z}.
\begin{definition}
For \(g,h\in \cF\) with \(g\leq h\) \(m\)-a.e., the pair \((g, h)\in \cF \times \cF\) satisfies the separability condition if there exist \(\alpha\)-potentials \(w_1, w_2 \in \cF\) such that \(g\leq w_1-w_2 \leq h\) \(m\)-a.e.
\label{separability}\end{definition}
First, we prove Theorem \ref{mainThm} under the separability condition.

\begin{proposition}\label{prop1}
The separability condition holds if and only if there exist \(\alpha\)-potentials \(\overline{v}, \underline{v}\in\cF\) such that 
\begin{eqnarray}
\overline{v}\geq \underline{v}+g,\ \cE_{\alpha}(\overline{v},u-\overline{v})\geq 0\ \text{for\ }u\in \cF \ \text{with\ }u\geq \underline{v}+g,\label{uebar}
\end{eqnarray}
and
\begin{eqnarray}
\underline{v}\geq \overline{v}-h,\ \cE_{\alpha}(\underline{v},u-\underline{v})\geq 0\ \text{for\ }u\in \cF \ \text{with\ }u\geq \overline{v}-h.
\label{sitabar}\end{eqnarray}
\label{sepequi}\end{proposition}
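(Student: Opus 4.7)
The equivalence splits into a trivial ``if'' direction and a substantive ``only if''. For the easy direction I would take $w_1:=\overline{v}$ and $w_2:=\underline{v}$; the pointwise inequalities $\overline{v}\geq\underline{v}+g$ and $\underline{v}\geq\overline{v}-h$ built into (\ref{uebar}) and (\ref{sitabar}) rearrange to $g\leq w_1-w_2\leq h$ $m$-a.e.

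For the converse, starting from $\alpha$-potentials $w_1,w_2$ with $g\leq w_1-w_2\leq h$, my plan is to run the Picard iteration $(\overline{v}^{(n)},\underline{v}^{(n)})\in\cF\times\cF$ with $\overline{v}^{(0)}=w_1$, $\underline{v}^{(0)}=w_2$, taking $\overline{v}^{(n+1)}$ to be the smallest $\alpha$-potential majorizing $\underline{v}^{(n)}+g$ and $\underline{v}^{(n+1)}$ the smallest $\alpha$-potential majorizing $\overline{v}^{(n+1)}-h$; existence of both is Lemma \ref{Zhanglem}(2). Since the smallest-majorant-$\alpha$-potential map is monotone in its obstacle and separability forces $w_1\geq w_2+g$ and $w_2\geq w_1-h$, an induction gives both sequences decreasing and nonnegative, hence admitting $m$-a.e.\ and $L^2$ limits $\overline{v},\underline{v}$. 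A uniform $\cF$-bound comes from testing the variational inequality for $\overline{v}^{(n+1)}$ against $w_1$, which majorizes the obstacle at every stage since $w_1\geq w_2+g\geq\underline{v}^{(n)}+g$; combined with the sector estimate (\ref{eq:sector}) this yields $\sqrt{\cE_\alpha(\overline{v}^{(n+1)},\overline{v}^{(n+1)})}\leq K_\alpha\sqrt{\cE_\alpha(w_1,w_1)}$, and the symmetric test against $w_2$ bounds $\underline{v}^{(n+1)}$.

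Reflexivity of $\cF$ produces a weakly convergent subsequence of the pair $(\overline{v}^{(n+1)},\underline{v}^{(n)})$ in $\cF\times\cF$, and the Banach--Saks theorem then extracts a further subsequence $(n_k)$ along which the Ces\`aro means of both components converge strongly in $\cF$ to limits that, by uniqueness, coincide with the $m$-a.e.\ limits $\overline{v},\underline{v}$. These limits are $\alpha$-potentials: each Ces\`aro mean is a convex combination of $\alpha$-potentials and so itself an $\alpha$-potential (the representing positive Radon measures add), and the class of $\alpha$-potentials is closed under $\cE_\alpha$-strong limits by Riesz applied to the limiting positive functional on $C_c(E)\cap\cF$. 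Passing to the $m$-a.e.\ limit in the iteration also gives $\overline{v}\geq\underline{v}+g$ and $\underline{v}\geq\overline{v}-h$.

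Transferring the variational inequalities to the limit is where I expect the main obstacle to lie, since in the absence of symmetry one cannot invoke the $\cE_\alpha$-convergence of monotone $\alpha$-potentials used in \cite{Z, FM}; instead, the Ces\`aro structure must be absorbed into the test function. Given $u\in\cF$ with $u\geq\underline{v}+g$, set
\[
\tilde{u}_N:=u+\frac{1}{N}\sum_{k=1}^N(\underline{v}^{(n_k)}-\underline{v}).
\]
Because $\underline{v}^{(n_k)}$ is decreasing in $k$, each summand dominates the $N$-th, so $\tilde{u}_N\geq u+(\underline{v}^{(n_N)}-\underline{v})\geq\underline{v}^{(n_N)}+g$, making $\tilde{u}_N\in\cF$ admissible in the VI for $\overline{v}^{(n_N+1)}$:
\[
\cE_\alpha(\overline{v}^{(n_N+1)},\tilde{u}_N)\geq\cE_\alpha(\overline{v}^{(n_N+1)},\overline{v}^{(n_N+1)}).
\]
Banach--Saks strong convergence $\tilde{u}_N\to u$, together with the weak convergence $\overline{v}^{(n_N+1)}\rightharpoonup\overline{v}$ and the uniform $\cE_\alpha$-bound, forces the left-hand side to tend to $\cE_\alpha(\overline{v},u)$, while weak lower semicontinuity of the $\cF$-norm --- valid since $\cE_\alpha(u,u)=\cE_\alpha^{(s)}(u,u)$ --- gives $\liminf\cE_\alpha(\overline{v}^{(n_N+1)},\overline{v}^{(n_N+1)})\geq\cE_\alpha(\overline{v},\overline{v})$; together they yield (\ref{uebar}). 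The mirror argument, with test function $u+\frac{1}{N}\sum_k(\overline{v}^{(n_k+1)}-\overline{v})$ applied to the VI for $\underline{v}^{(n_k+1)}$, establishes (\ref{sitabar}).
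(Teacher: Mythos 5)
Your proof is correct, but it executes the key limit passage differently from the paper. Both arguments share the same skeleton --- iterate the one-obstacle problem of Theorem \ref{Zhang}, use monotonicity of the iterates, bound the energies by testing against $w_1,w_2$ via the sector condition, and invoke Banach--Saks --- but they diverge in two places. First, your iteration starts at $(w_1,w_2)$ and decreases toward the fixed point, whereas the paper starts at $(0,0)$ and increases up to it (bounded above by $w_1,w_2$); this is a harmless mirror image, and your Gauss--Seidel update for $\underline{v}^{(n+1)}$ versus the paper's Jacobi update is likewise immaterial. Second, and more substantively: to transfer the variational inequality to the limit, the paper puts the Ces\`aro mean on the \emph{solution} side, expanding $\cE_{\alpha}\bigl(\frac{1}{N}\sum_k\overline{v}_{n_k},\,u-\frac{1}{N}\sum_k\overline{v}_{n_k}\bigr)$ into individual VIs plus cross terms, and discards the cross terms using the algebraic identity $\sum_{k<j}\cE_{\alpha}(\overline{v}_{n_k}-\overline{v}_{n_j},\overline{v}_{n_k}-\overline{v}_{n_j})\geq 0$; only strong convergence of the Ces\`aro mean is then needed. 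You instead put the Ces\`aro correction on the \emph{test-function} side, keep the VI at the level of a single iterate $\overline{v}^{(n_N+1)}$, and close the argument with weak convergence of the iterates plus weak lower semicontinuity of $\cE_{\alpha}(\cdot,\cdot)$ on the diagonal (valid since it coincides with $\cE_{\alpha}^{(s)}$ there). Your route is the more standard elliptic-VI limit argument and makes transparent where monotonicity of the iterates enters (admissibility of $\tilde{u}_N$); the paper's route stays entirely at the level of the bilinear form and never needs the weak topology. One point you should spell out: the convergence $\cE_{\alpha}(\overline{v}^{(n_N+1)},\tilde{u}_N)\to\cE_{\alpha}(\overline{v},u)$ requires splitting off $\cE_{\alpha}(\overline{v}^{(n_N+1)},\tilde{u}_N-u)$, which is controlled by the sector condition (\ref{eq:sector}) and the uniform energy bound, while $c\mapsto\cE_{\alpha}(c,u)$ is a bounded linear functional (again by (\ref{eq:sector})) so the weak convergence handles the remaining term; you name the right ingredients but do not perform the split. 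Also, your closure argument for the class of $\alpha$-potentials under strong limits is more roundabout than necessary: as in the paper, $\cE_{\alpha}(\overline{v},\phi)\geq 0$ for $\phi\geq 0$ follows directly from (\ref{uebar}) with $u=\overline{v}+\phi$.
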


\begin{remark}
Proposition \ref{sepequi} is proved by Zabczyk \cite[Proposition 1]{Z} in the case that \((\cE, \cF)\) is a Dirichlet form. In our case, we prove Proposition \ref{sepequi} in a similar way to the proof of \cite[proposition 1]{Z} except for the convergence of \(\overline{v}_n \to \overline{v}\) and \(\underline{v}_n \to \underline{v}\) in \(\cE_{\alpha}.\)
\end{remark}

\begin{proof}[Proof of Proposition \ref{prop1}]
Suppose that there exist \(\alpha\)-potentials \(\overline{v}, \underline{v}\in\cF\) such that \((\ref{uebar})\) and \((\ref{sitabar})\) hold. Then the separability condition holds by taking \(w_1=\overline{v}\) and \(w_2=\underline{v}\).

Suppose that the separability condition holds. We fix \(\alpha\)-potentials \(w_1,w_2 \in \cF\) satisfying \(g\leq w_1-w_2 \leq h\) \(m\)-a.e. We define sequences \(\{\overline{v}_n\}_n\) and \(\{\underline{v}_n\}_n\) as follows inductively. Let \(\overline{v}_0=\underline{v}_0:=0\). For \(\overline{v}_{n}\) and \(\underline{v}_{n}\), by using Proposition \ref{Zhang}, we define \(\alpha\)-potentials \(\overline{v}_{n+1}, \underline{v}_{n+1}\in \cF\) satisfying \(\underline{v}_{n}+g\leq \overline{v}_{n+1}\) \(m\)-a.e., \(\overline{v}_{n}-h\leq \underline{v}_{n+1}\) \(m\)-a.e.,
\begin{eqnarray}
\cE_{\alpha}(\overline{v}_{n+1},u-\overline{v}_{n+1})\geq 0\ \text{for\ any\ }u\in \cF\ \text{satisfying}\  \underline{v}_{n}+g\leq u\ m\text{-a.e.} \label{eq:uesep}
\end{eqnarray}
and
\begin{eqnarray}
\cE_{\alpha}(\underline{v}_{n+1},u-\underline{v}_{n+1})\geq 0\ \text{for\ any\ }u\in \cF\ \text{satisfying}\  \overline{v}_{n}-h\leq u\ m\text{-a.e.} \label{eq:sitasep}
\end{eqnarray}
Moreover we find \(\overline{v}_1\leq \overline{v}_2 \leq \cdots \leq \overline{v}_n \leq \cdots \leq w_1 \) and \(\underline{v}_1\leq \underline{v}_2 \leq \cdots \leq \underline{v}_n \leq \cdots \leq w_2 \) \(m\)-a.e. Indeed, we can prove these as follows inductively. Both \(\overline{v}_1\) and \(\underline{v}_1\) are non-negative since these are \(\alpha\)-potentials. If \(\underline{v}_n\leq \underline{v}_{n+1}\), then we have \(\underline{v}_n+g \leq \underline{v}_{n+1}+g \leq \overline{v}_{n+2}\). Since \(\overline{v}_{n+1}\) is the smallest \(\alpha\)-potential majorizing \(\underline{v}_n+ g\ m\)-a.e., \(\overline{v}_{n+1}\leq \overline{v}_{n+2}\) holds. The other can be proved similarly.

Since \(\overline{v}_n\) is an \(\alpha\)-potential, by \((\ref{eq:uesep})\) and \((\ref{eq:sector})\), we have
\[\cE_{\alpha}(\overline{v}_n,\overline{v}_n)\leq \cE_{\alpha}(\overline{v}_n,w_1)\leq K_{\alpha}\cE_{\alpha}(\overline{v}_n,\overline{v}_n)^{1/2}\cE_{\alpha}(w_1,w_1)^{1/2}.\]
So \(\{\cE_{\alpha}(\overline{v}_n,\overline{v}_n)\}_n\) is bounded. Similarly, \(\{\cE_{\alpha}(\underline{v}_n,\underline{v}_n)\}_n\) is also bounded. By Banach Saks's theorem, there exists subsequence \(\{n_k\}_k\) and \(\overline{v}, \underline{v}\in \cF\) such that \(\frac{1}{N}\sum_{k=1}^N\overline{v}_{n_k}\) converges to \(\overline{v}\) in \(\cE_{\alpha}\) and \(\frac{1}{N}\sum_{k=1}^N\underline{v}_{n_k}\) converges to \(\underline{v}\) in \(\cE_{\alpha}\) as \(N\to \infty\). For any \(u\in \cF\) with \(u\geq \underline{v}+g\), we have
\begin{eqnarray*}
&&\cE_{\alpha}(\frac{1}{N}\sum_{k=1}^N\overline{v}_{n_k},u-\frac{1}{N}\sum_{k=1}^N\overline{v}_{n_k})\\
&&\hspace{10mm} = \frac{1}{N^2} \sum_{k=1}^N\sum_{j=1}^N \cE_{\alpha}(\overline{v}_{n_k},u-\overline{v}_{n_j})\\
&&\hspace{10mm}= \frac{1}{N^2} \sum_{k=1}^N\sum_{j=1}^N \left\{ \cE_{\alpha}(\overline{v}_{n_k},u-\overline{v}_{n_k}) + \cE_{\alpha}(\overline{v}_{n_k},\overline{v}_{n_k}-\overline{v}_{n_j}) \right\} \\
\end{eqnarray*}

Since \(\{\underline{v}_{n_k}\}_{n_k}\) is increasing, so is \(\{\frac{1}{N}\sum_{k=1}^N\underline{v}_{n_k}\}_N\). By taking further subsequence and quasi-continuous versions if necessary, \(\frac{1}{N}\sum_{k=1}^N\underline{v}_{n_k}\) converges to \(\underline{v}\) q.e. by \cite[Theorem 2.2.5]{O}. Then we have 
\[\frac{1}{N}\sum_{k=1}^N\underline{v}_{n_k} \geq \frac{1}{N}\sum_{k=1}^m\underline{v}_{n_k}+ \frac{N-m}{N}\underline{v}_{n_m}\]
for fixed \(m\), and, by letting \(N\) tend to \(\infty\), \(\underline{v} \geq \underline{v}_{n_m}\) holds. So we have \(u\geq \underline{v}+g \geq \underline{v}_{n_k}+g \) q.e. Combining this with \((\ref{eq:uesep})\), it holds that
\begin{eqnarray*}
&&\frac{1}{N^2} \sum_{k=1}^N\sum_{j=1}^N \left\{ \cE_{\alpha}(\overline{v}_{n_k},u-\overline{v}_{n_k}) + \cE_{\alpha}(\overline{v}_{n_k},\overline{v}_{n_k}-\overline{v}_{n_j}) \right\} \\
&&\hspace{10mm}\geq  \frac{1}{N^2} \sum_{k=1}^N\sum_{j=1}^N \cE_{\alpha}(\overline{v}_{n_k},\overline{v}_{n_k}-\overline{v}_{n_j})\\
&&\hspace{10mm} = \frac{1}{N^2} \sum_{k<j}\cE_{\alpha}(\overline{v}_{n_k},\overline{v}_{n_k}-\overline{v}_{n_j})+\frac{1}{N^2} \sum_{k>j}\cE_{\alpha}(\overline{v}_{n_k},\overline{v}_{n_k}-\overline{v}_{n_j})\\
&&\hspace{10mm} =\frac{1}{N^2} \sum_{k<j}\cE_{\alpha}(\overline{v}_{n_k},\overline{v}_{n_k}-\overline{v}_{n_j})+\frac{1}{N^2} \sum_{k<j}\cE_{\alpha}(\overline{v}_{n_j},\overline{v}_{n_j}-\overline{v}_{n_k})\\
&&\hspace{10mm} =\frac{1}{N^2} \sum_{k<j}\cE_{\alpha}(\overline{v}_{n_k}-\overline{v}_{n_j},\overline{v}_{n_k}-\overline{v}_{n_j})\\
&&\hspace{10mm}\geq  0.
\end{eqnarray*}
In the second term of the second equality, we replaced \(j\) with \(k.\) By letting \(N\) tend to \(\infty\), we have \(\cE_{\alpha}(\overline{v}, u-\overline{v})\geq 0.\)

Similarly, \(\cE_{\alpha}(\underline{v}, u-\underline{v})\geq 0\) for any \(u\in \cF\) with \(u\geq \overline{v}-h\) \(m\)-a.e.

Moreover, since \(\overline{v}_{n_{k+1}}\geq \overline{v}_{n_k+1} \geq \underline{v}_{n_k}+g\), we have
\begin{eqnarray}
\nonumber \frac{1}{N}\sum_{k=1}^{N}\overline{v}_{n_{k}}&\geq & \frac{1}{N}\sum_{k=1}^{N} \underline{v}_{n_{k}}+g+ \frac{\underline{v}_{n_0}-\underline{v}_{n_N}}{N}\\
&\geq &\frac{1}{N}\sum_{k=1}^{N} \underline{v}_{n_{k}}+g+ \frac{\underline{v}_{n_0}-w_2}{N}.
\label{eq:cesaro}\end{eqnarray}
By letting \(N\) tend to \(\infty\), we have \(\overline{v}\geq \underline{v}+g\) \(m\)-a.e. Similarly, we have \(\underline{v}\geq \overline{v}-h\) \(m\)-a.e.

Furthermore, we have \(\cE_{\alpha}(\overline{v},\phi)=\cE_{\alpha}(\overline{v},\phi+\overline{v}-\overline{v})\geq 0\) for any \(\phi \geq 0\) by \((\ref{eq:uesep})\), and so \(\overline{v}\) is an \(\alpha\)-potential. Similarly \(\underline{v}\) is also an \(\alpha\)-potential. The prof is completed.
\end{proof}

\begin{corollary}\label{sepcor}
Suppose that the separability condition holds. For \(\overline{v}, \underline{v}\) which appeared in Proposition \ref{sepequi}, let \(v:=\overline{v}-\underline{v}\). Then \(v\) is the unique solution of \((\ref{eq:varineq})\) satisfying \(g\leq v\leq h\) \(m\)-a.e.
\end{corollary}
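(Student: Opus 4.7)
The plan is to derive the variational inequality (\ref{eq:varineq}) by combining the two inequalities (\ref{uebar}) and (\ref{sitabar}) supplied by Proposition \ref{sepequi} with well-chosen test functions, and then to establish uniqueness from the strict coercivity of $\cE_{\alpha}$ for $\alpha>\alpha_{0}$.

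The two-sided bound $g\leq v\leq h$ is essentially free: $\overline{v}\geq\underline{v}+g$ from (\ref{uebar}) immediately gives $g\leq\overline{v}-\underline{v}=v$ $m$-a.e., while $\underline{v}\geq\overline{v}-h$ from (\ref{sitabar}) gives $v\leq h$ $m$-a.e.

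For the inequality itself, I fix an admissible test function $u\in\cF$ with $g\leq u\leq h$ $m$-a.e.\ and substitute $u+\underline{v}$ into (\ref{uebar}) — it majorizes $\underline{v}+g$ and hence is admissible there — and $\overline{v}-u$ into (\ref{sitabar}) — which majorizes $\overline{v}-h$. Rewriting each resulting inequality in terms of $u-v=u+\underline{v}-\overline{v}=-(\overline{v}-u-\underline{v})$ and adding them, the bilinearity of $\cE_{\alpha}$ in the first slot collapses the sum to $\cE_{\alpha}(\overline{v}-\underline{v},u-v)=\cE_{\alpha}(v,u-v)\geq 0$, which is exactly (\ref{eq:varineq}).

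For uniqueness, if $v_{1},v_{2}\in\cF$ both satisfy (\ref{eq:varineq}) with $g\leq v_{i}\leq h$, I feed each as the test function for the other and add; bilinearity collapses the cross terms to $-\cE_{\alpha}(v_{1}-v_{2},v_{1}-v_{2})\geq 0$. By $(\cE.3)$, the symmetric form $\cE_{\alpha}^{(s)}$ — which coincides with $\cE_{\alpha}$ on the diagonal — turns $\cF$ into a Hilbert space for $\alpha>\alpha_{0}$, so the displayed inequality forces $v_{1}=v_{2}$. I do not expect any genuine obstacle here: once the substitutions $u+\underline{v}$ and $\overline{v}-u$ are identified, each step is a direct bilinear manipulation, and membership of $u+\underline{v}$, $\overline{v}-u$, and $v_{1}-v_{2}$ in $\cF$ is automatic from linearity.
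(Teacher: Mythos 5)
Your proof is correct. The paper states Corollary \ref{sepcor} without any proof, and your argument --- reading off $g\leq v\leq h$ from the pointwise bounds in $(\ref{uebar})$ and $(\ref{sitabar})$, testing $(\ref{uebar})$ with $u+\underline{v}$ and $(\ref{sitabar})$ with $\overline{v}-u$ and subtracting to get $\cE_{\alpha}(v,u-v)\geq 0$, then deducing uniqueness from $\cE_{\alpha}(v_1-v_2,v_1-v_2)\leq 0$ together with $\cE_{\alpha}(w,w)\geq(\alpha-\alpha_0)\|w\|^2$ for $\alpha>\alpha_0$ --- is exactly the standard derivation (Zabczyk's) that the authors implicitly rely on.
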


To prove \((\ref{eq:supinf})\) and \((\ref{eq:saddle})\) under the separability condition, we use the following lemma.
\begin{lemma}\label{supermartingale}
Under the setting of Corollary \ref{sepcor}, there exists a properly exceptional set \(N\) such that, for any \(x\in N^c\),
\begin{eqnarray}
\overline{v}(x)=\mathbb{E}_x[e^{-\alpha \sigma} \overline{v}(X_{\sigma})] \text{\ for\ any\ stopping\ time\ }\sigma \leq \hat{\sigma},\label{eq:sup1}\\
\underline{v}(x)=\mathbb{E}_x[e^{-\alpha \tau} \underline{v}(X_{\tau})] \text{\ for\ any\ stopping\ time\ }\tau \leq \hat{\tau}\label{eq:sup2}.
\end{eqnarray}
\end{lemma}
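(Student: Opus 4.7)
The plan is to derive both identities from the stochastic representation of $\overline{v}$ and $\underline{v}$ supplied by Theorem \ref{Zhang}, combined with the strong Markov property of the Hunt process $X$. I will describe the argument for \((\ref{eq:sup1})\); the proof of \((\ref{eq:sup2})\) follows by the symmetric argument with the roles of $\overline{v}$ and $\underline{v}$, the obstacles $\underline{v}+g$ and $\overline{v}-h$, and the hitting times $\hat{\sigma}$ and $\hat{\tau}$ interchanged, via Proposition \ref{sepequi}.

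First I would note that the hitting time $\hat{\sigma}=\inf\{t\geq 0:\tilde{v}(X_t)=\tilde{g}(X_t)\}$ of the lemma coincides, outside a properly exceptional set, with the first hitting time of the coincidence set $\{\overline{v}=\underline{v}+g\}$, since $v=\overline{v}-\underline{v}$ and all functions are taken quasi-continuous. By Proposition \ref{sepequi} combined with Lemma \ref{Zhanglem}, $\overline{v}$ is the smallest $\alpha$-potential majorizing $\underline{v}+g$, so Theorem \ref{Zhang} applied to the obstacle $\underline{v}+g\in\cF$ supplies a properly exceptional set $N_1$ with
\[
\overline{v}(x)=\mathbb{E}_x\bigl[e^{-\alpha\hat{\sigma}}(\underline{v}+g)(X_{\hat{\sigma}})\bigr]\qquad\text{for all }x\in N_1^c.
\]
On $\{\hat{\sigma}<\infty\}$ the definition of $\hat{\sigma}$ forces $\overline{v}(X_{\hat{\sigma}})=(\underline{v}+g)(X_{\hat{\sigma}})$, while on $\{\hat{\sigma}=\infty\}$ the factor $e^{-\alpha\hat{\sigma}}$ vanishes, so the identity rewrites as $\overline{v}(x)=\mathbb{E}_x[e^{-\alpha\hat{\sigma}}\overline{v}(X_{\hat{\sigma}})]$ on $N_1^c$.

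Next, for a general stopping time $\sigma\leq\hat{\sigma}$, I would invoke the strong Markov property. Because $\hat{\sigma}$ is a first hitting time, the identity $\hat{\sigma}=\sigma+\hat{\sigma}\circ\theta_\sigma$ holds on $\{\sigma\leq\hat{\sigma}\}$, where $\theta$ denotes the shift. Enlarging $N_1$ to a properly exceptional set $N$ with the property that $X$ started from any $x\in N^c$ almost surely never enters $N$, the preceding display applied at the (a.s.\ admissible) point $X_\sigma$ together with the strong Markov property gives
\[
\overline{v}(x)=\mathbb{E}_x\bigl[e^{-\alpha\hat{\sigma}}\overline{v}(X_{\hat{\sigma}})\bigr]=\mathbb{E}_x\Bigl[e^{-\alpha\sigma}\,\mathbb{E}_{X_\sigma}\bigl[e^{-\alpha\hat{\sigma}}\overline{v}(X_{\hat{\sigma}})\bigr]\Bigr]=\mathbb{E}_x\bigl[e^{-\alpha\sigma}\overline{v}(X_\sigma)\bigr],
\]
which is \((\ref{eq:sup1})\).

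The main obstacle I anticipate is the bookkeeping of properly exceptional sets: one must verify that $\{\tilde{v}=\tilde{g}\}$ coincides, modulo a properly exceptional set, with $\{\overline{v}=\underline{v}+g\}$, and that the hitting-time identification $X_{\hat{\sigma}}\in\{\overline{v}=\underline{v}+g\}$ on $\{\hat{\sigma}<\infty\}$ is valid quasi-surely. Both points rest on the quasi-continuity of $\overline{v},\underline{v},g$ and on the standard properly-exceptional-set machinery of \cite{O}; once these technicalities are absorbed into the set $N$, the remainder of the proof is the strong-Markov computation above.
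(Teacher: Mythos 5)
Your proof is correct, and its first half is exactly the paper's: identify \(\hat{\sigma}\) with the first entry time of \(\{\overline{v}=\underline{v}+g\}\), note that \(\overline{v}\) solves the one-obstacle problem of Theorem \ref{Zhang} with obstacle \(\underline{v}+g\) (by \((\ref{uebar})\) and the uniqueness of the solution to that variational inequality), and read off \(\overline{v}(x)=\mathbb{E}_x[e^{-\alpha\hat{\sigma}}\overline{v}(X_{\hat{\sigma}})]\) q.e. Where you diverge is the passage from \(\hat{\sigma}\) to a general stopping time \(\sigma\leq\hat{\sigma}\): the paper observes that, \(\overline{v}\) being an \(\alpha\)-potential, \((e^{-\alpha t}\overline{v}(X_t))_t\) is a non-negative supermartingale under \(\mathbb{P}_x\), and then applies optional sampling twice to squeeze \(\mathbb{E}_x[e^{-\alpha\hat{\sigma}}\overline{v}(X_{\hat{\sigma}})]\leq\mathbb{E}_x[e^{-\alpha\sigma}\overline{v}(X_{\sigma})]\leq\overline{v}(x)\); you instead use the additivity \(\hat{\sigma}=\sigma+\hat{\sigma}\circ\theta_{\sigma}\) on \(\{\sigma\leq\hat{\sigma}\}\) (valid because \(\hat{\sigma}\) is a first entry time and no entry into the coincidence set occurs before \(\sigma\)) together with the strong Markov property to get the equality in one stroke. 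Both arguments are sound; yours avoids invoking the excessive/supermartingale structure of \(\alpha\)-potentials and the optional sampling theorem for non-negative supermartingales, at the price of the measurability and properly-exceptional-set bookkeeping needed to evaluate the representation formula at the random point \(X_{\sigma}\) — which you correctly flag and which is absorbed into \(N\) exactly as in the paper. The one technical point common to both proofs, the identification \(\overline{v}(X_{\hat{\sigma}})=(\underline{v}+g)(X_{\hat{\sigma}})\) on \(\{\hat{\sigma}<\infty\}\) via quasi-continuity, is already built into the statement of Theorem \ref{Zhang}, so nothing is missing.
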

\begin{proof}
Although the proof is the same as that appearing in \cite[Proposition 2]{Z}, we describe the proof of this lemma.

Since \(v=\overline{v}-\underline{v}\), we have \(\hat{\sigma}=\inf \{t\geq 0; \overline{v}(X_t)=(\underline{v}+g)(X_t)\}\). By Theorem \ref{Zhang}, we have
\[\overline{v}(x)=\sup_{\sigma}\mathbb{E}_x[e^{-\alpha \sigma}(\underline{v}+g)(X_{\sigma})]=\mathbb{E}_x[e^{-\alpha \hat{\sigma}}\overline{v}(X_{\hat{\sigma}})]\ \text{for\ q.e.\ }x\]
On the other hand, since \(\overline{v}\) is an \(\alpha\)-potential and \(X\) enjoys the strong Markov property, \((\{e^{-\alpha t}\overline{v}(X_t)\}_t, \mathbb{P}_x)\) is a non-negative supermartingale for q.e. \(x\). By the optional sampling theorem, we have 
\[\overline{v}(x)=\mathbb{E}_x[e^{-\alpha \hat{\sigma}}\overline{v}(X_{\hat{\sigma}})] \leq \mathbb{E}_x[e^{-\alpha \sigma} \overline{v}(X_{\sigma})]\]
for q.e. \(x\) and any stopping time \(\sigma \leq \hat{\sigma}\). By using optional sampling theorem again, we have \(\mathbb{E}_x[e^{-\alpha \sigma} \overline{v}(X_{\sigma})] \leq \overline{v}(x)\) for q.e. \(x\) and any stopping time \(\sigma \). So \((\ref{eq:sup1})\) holds. Similarly, \((\ref{eq:sup2})\) holds.  
\end{proof}

\begin{theorem}\label{sepmain}
Suppose that the separability condition holds. Then Theorem \ref{mainThm} holds.
\end{theorem}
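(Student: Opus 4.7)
My plan is to split the conclusion of Theorem~\ref{mainThm} into its three assertions and prove each in turn using tools already assembled. The variational-inequality part (together with $g\le v\le h$ $m$-a.e.) is delivered immediately by Corollary~\ref{sepcor} upon setting $v:=\overline{v}-\underline{v}$, where $\overline{v},\underline{v}$ are the $\alpha$-potentials produced by Proposition~\ref{sepequi}; passing to the quasi-continuous versions adopted throughout the section, $\tilde{v}=\overline{v}-\underline{v}$ serves as the required quasi-continuous version for the probabilistic statements. The hitting times from Theorem~\ref{mainThm} can then be rewritten as
\begin{equation*}
\hat{\sigma}=\inf\{t\ge 0:\overline{v}(X_t)=(\underline{v}+g)(X_t)\},\qquad \hat{\tau}=\inf\{t\ge 0:\underline{v}(X_t)=(\overline{v}-h)(X_t)\},
\end{equation*}
so they coincide with the optimal hitting times arising in Theorem~\ref{Zhang} applied to the single-obstacle problems solved by $\overline{v}$ and $\underline{v}$; consequently, Lemma~\ref{supermartingale} is available on a common properly exceptional set $N$.

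Next I would verify (\ref{eq:saddle}) by a direct computation. At $(\hat{\tau},\hat{\sigma})$ the definitions give $\tilde{h}(X_{\hat{\tau}})=\tilde{v}(X_{\hat{\tau}})$ and $\tilde{g}(X_{\hat{\sigma}})=\tilde{v}(X_{\hat{\sigma}})$, hence
\begin{equation*}
J_x(\hat{\tau},\hat{\sigma})=\mathbb{E}_x\!\left[e^{-\alpha(\hat{\tau}\wedge\hat{\sigma})}\tilde{v}(X_{\hat{\tau}\wedge\hat{\sigma}})\right].
\end{equation*}
Applying (\ref{eq:sup1}) to the stopping time $\hat{\tau}\wedge\hat{\sigma}\le\hat{\sigma}$ and (\ref{eq:sup2}) to $\hat{\tau}\wedge\hat{\sigma}\le\hat{\tau}$ and subtracting yields $J_x(\hat{\tau},\hat{\sigma})=\overline{v}(x)-\underline{v}(x)=\tilde{v}(x)$. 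For the inequality $J_x(\hat{\tau},\sigma)\le\tilde{v}(x)$ with an arbitrary stopping time $\sigma$, I would observe that the payoff at $\hat{\tau}\wedge\sigma$ is dominated by $\tilde{v}(X_{\hat{\tau}\wedge\sigma})$: on $\{\hat{\tau}\le\sigma\}$ via $\tilde{h}=\tilde{v}$ at $\hat{\tau}$, and on $\{\hat{\tau}>\sigma\}$ via $\tilde{g}\le\tilde{v}$ q.e. Since $\hat{\tau}\wedge\sigma\le\hat{\tau}$, identity (\ref{eq:sup2}) gives $\underline{v}(x)=\mathbb{E}_x[e^{-\alpha(\hat{\tau}\wedge\sigma)}\underline{v}(X_{\hat{\tau}\wedge\sigma})]$; and since $\overline{v}$ is an $\alpha$-potential, $(e^{-\alpha t}\overline{v}(X_t))_{t\ge 0}$ is a non-negative $\mathbb{P}_x$-supermartingale, so optional sampling produces $\overline{v}(x)\ge \mathbb{E}_x[e^{-\alpha(\hat{\tau}\wedge\sigma)}\overline{v}(X_{\hat{\tau}\wedge\sigma})]$. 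Subtracting delivers $\tilde{v}(x)\ge J_x(\hat{\tau},\sigma)$. The symmetric argument, swapping the roles of $\overline{v}$ and $\underline{v}$ and invoking (\ref{eq:sup1}) together with the supermartingale property of $e^{-\alpha t}\underline{v}(X_t)$, yields $J_x(\tau,\hat{\sigma})\ge\tilde{v}(x)$.

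Finally, (\ref{eq:supinf}) follows from the saddle-point inequalities in the standard fashion: the chain
\begin{equation*}
\inf_{\tau}\sup_{\sigma}J_x(\tau,\sigma)\le \sup_{\sigma}J_x(\hat{\tau},\sigma)\le J_x(\hat{\tau},\hat{\sigma})\le \inf_{\tau}J_x(\tau,\hat{\sigma})\le \sup_{\sigma}\inf_{\tau}J_x(\tau,\sigma)
\end{equation*}
combined with the automatic reverse $\sup_{\sigma}\inf_{\tau}\le\inf_{\tau}\sup_{\sigma}$ pins everything to $\tilde{v}(x)$. The main obstacle I anticipate is the bookkeeping around the fact that the equality (\ref{eq:sup1}) holds only for stopping times dominated by $\hat{\sigma}$ and (\ref{eq:sup2}) only for those dominated by $\hat{\tau}$, while the mixed time $\hat{\tau}\wedge\sigma$ (resp.\ $\tau\wedge\hat{\sigma}$) need not satisfy both bounds; the resolution is to replace the missing equality by the supermartingale \emph{inequality} for the other $\alpha$-potential, which points in exactly the direction needed to close each side of (\ref{eq:saddle}).
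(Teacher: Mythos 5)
Your proposal is correct and follows essentially the same route as the paper: set $v:=\overline{v}-\underline{v}$, get the variational inequality from Corollary~\ref{sepcor}, and establish the saddle-point inequalities by combining the equalities of Lemma~\ref{supermartingale} on one side with the supermartingale (optional sampling) inequality for the other $\alpha$-potential on the other side, then deduce \eqref{eq:supinf} from \eqref{eq:saddle}. Your write-up merely spells out both symmetric halves and the equality $J_x(\hat{\tau},\hat{\sigma})=\tilde{v}(x)$ in more detail than the paper does.
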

\begin{proof}
For \(\overline{v}, \underline{v}\) which appeared in Proposition \ref{sepequi}, let \(v:=\overline{v}-\underline{v}\). By Corollary \ref{sepcor}, \(v\) enjoys \((\ref{eq:varineq})\), so it is enough to prove \((\ref{eq:supinf})\) and \((\ref{eq:saddle})\). 
Since \((\{e^{-\alpha t}\underline{v}(X_t)\}_t, \mathbb{P}_x)\) is a non-negative supermartingale, it holds that \(\mathbb{E}_x[e^{-\alpha \sigma} \underline{v}(X_{\sigma})] \leq \underline{v}(x)\) for any stopping time \(\sigma \). Combining this with Lemma \ref{supermartingale} and \(v\leq h\), for any stopping time \(\tau\), we have
\begin{eqnarray*}
v(x)&=&\overline{v}(x)-\underline{v}(x)\\
&\leq &\mathbb{E}_x[e^{-\alpha (\hat{\sigma}\wedge \tau)} \overline{v}(X_{\hat{\sigma}\wedge \tau})]-\mathbb{E}_x[e^{-\alpha (\hat{\sigma}\wedge \tau)} \underline{v}(X_{\hat{\sigma}\wedge \tau})]\\
&=& \mathbb{E}_x[e^{-\alpha (\hat{\sigma}\wedge \tau)} v(X_{\hat{\sigma}\wedge \tau})] \\
&\leq &J_x(\tau, \hat{\sigma})
\end{eqnarray*}
for q.e. \(x\). Similarly, \(\overline{v}(x)\geq J_x(\hat{\tau}, \sigma)\) holds for any stopping time \(\sigma\) and q.e. \(x.\) So \((\ref{eq:saddle})\) holds and \(v(x)=J_x(\hat{\tau}, \hat{\sigma}).\) By taking \(\sup_{\sigma}\inf_{\tau}\) and \(\inf_{\tau}\sup_{\sigma}\), \((\ref{eq:saddle})\) implies \((\ref{eq:supinf})\).
\end{proof}

\subsection{General cases}
Next we prove Theorem \ref{mainThm} without the separability condition. 
We fix \(g,h\in \cF\) with \(g\leq h\) \(m\)-a.e.

\begin{lemma}\label{approxLem}
There exist sequences \(\{g_n\}_n, \{h_n\}_n\subset \cF\) such that the pair \((g_n,h_n)\) enjoys the separability condition for each \(n\), and \(g_n\) (resp. \(h_n\)) converges to \(g\) (resp. \(h\)) in \(\cE_{\alpha}\). 
\end{lemma}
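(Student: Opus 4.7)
My plan is to approximate $g$ in $\cE_{\alpha}$ by a difference of two $\alpha$-potentials and then truncate against $h$ so that the separability condition of Definition \ref{separability} is inherited by the approximating pair.

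First, let $U_{\alpha}$ denote the $\alpha$-resolvent associated with $(\cE,\cF)$, characterised by $\cE_{\alpha}(U_{\alpha}f, v)=(f,v)$ for every $v\in\cF$. Its image equals the domain of the $L^{2}$-generator and is $\cE_{\alpha}$-dense in $\cF$, so we can choose $f_{n}\in L^{2}(E;m)$ with $\tilde g_{n}:=U_{\alpha}f_{n}\to g$ in $\cE_{\alpha}$. Writing $f_{n}=f_{n}^{+}-f_{n}^{-}$ and using that $U_{\alpha}$ is positivity-preserving for lower-bounded semi-Dirichlet forms, the functions $w_{1}^{(n)}:=U_{\alpha}f_{n}^{+}$ and $w_{2}^{(n)}:=U_{\alpha}f_{n}^{-}$ are $\alpha$-potentials and $\tilde g_{n}=w_{1}^{(n)}-w_{2}^{(n)}$.

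Next I set
\[
g_{n}:=\tilde g_{n}\wedge h,\qquad h_{n}:=\tilde g_{n}\vee h.
\]
Both belong to $\cF$ by $(\cE.4)$ applied twice (namely $u\wedge 0\in\cF$ yields $u^{+}\in\cF$ by linearity, and then $u\wedge v=u-(u-v)^{+}$). The chain of inequalities
\[
g_{n}=\tilde g_{n}\wedge h\ \leq\ \tilde g_{n}=w_{1}^{(n)}-w_{2}^{(n)}\ \leq\ \tilde g_{n}\vee h=h_{n}
\]
is immediate, so $(g_{n},h_{n})$ enjoys the separability condition with the same $\alpha$-potentials $w_{1}^{(n)},w_{2}^{(n)}$.

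It remains to show $g_{n}\to g$ and $h_{n}\to h$ in $\cE_{\alpha}$. Since $g\leq h$ one has $g\wedge h=g$ and $g\vee h=h$, so the claim reduces to the $\cE_{\alpha}^{(s)}$-continuity of the lattice operations on $\cF$, and ultimately to that of $u\mapsto u^{+}$. I expect this to be the main obstacle: in the symmetric Dirichlet case it is immediate from the contraction property, whereas in the semi-Dirichlet setting the contraction property together with the sector condition gives only a uniform $\cE_{\alpha}^{(s)}$-bound on $\{(\tilde g_{n}-h)^{+}\}$ and $L^{2}$-convergence, from which weak convergence in $\cF$ follows. If strong $\cE_{\alpha}$-convergence does not hold along the original indices, I would extract a subsequence and replace $\tilde g_{n}$ by the Cesàro mean of a subsequence via Banach-Saks's theorem, exactly in the spirit of the argument used in the proof of Proposition \ref{sepequi} above.
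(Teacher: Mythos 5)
The paper's own ``proof'' of this lemma is a one-line deferral to \cite[Proposition 4]{Z}, and your construction is the natural one behind that reference: approximate a function lying between $g$ and $h$ by elements of the resolvent range (which are differences of $\alpha$-potentials, hence automatically separate), then clip against the obstacles. The supporting details you invoke are all sound in the semi-Dirichlet setting: $U_{\alpha}(L^2)$ is $\cE_{\alpha}$-dense in $\cF$, $U_{\alpha}$ is positivity preserving, $\cE_{\alpha}(U_{\alpha}f^{\pm},v)=(f^{\pm},v)$ shows $U_{\alpha}f^{\pm}$ are $\alpha$-potentials in the sense used here, and $(\cE.4)$ with $a=0$ gives closedness of $\cF$ under $\vee$ and $\wedge$. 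The inheritance of the separability condition by $(g_n,h_n)=(\tilde g_n\wedge h,\ \tilde g_n\vee h)$ is immediate, as you say.

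The one genuine loose end is the closing convergence step, and the repair you sketch is aimed at the wrong object: replacing $\tilde g_n$ by Ces\`aro means of a subsequence cannot help, because $\tilde g_n$ already converges \emph{strongly} to $g$; the obstruction sits entirely in the lattice operation $u\mapsto (u-h)^{+}$ applied afterwards. Two correct ways to finish: (a) apply Banach--Saks to the pairs $(g_n,h_n)$ themselves --- they are $\cE_{\alpha}$-bounded by $(\cE.4)$ and the sector condition, converge to $(g,h)$ in $L^2$, hence weakly in $\cF$, so Ces\`aro means of a subsequence converge strongly to $(g,h)$; and the separability condition survives convex combinations (if $g_{n_k}\leq w_1^{(k)}-w_2^{(k)}\leq h_{n_k}$, average the three inequalities), exactly as the paper uses for $(\check g_N,\check h_N)$ in Section 3.2. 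Or, more directly, (b) no Ces\`aro means are needed: writing $u_n:=\tilde g_n-h\to g-h\leq 0$ in $\cE_{\alpha}$, one has $h_n-h=u_n^{+}$ and, by $(\cE.4)$, $\cE_{\alpha}(u_n^{+},u_n^{+})\leq \cE_{\alpha}(u_n,u_n^{+})=\cE_{\alpha}(u_n-(g-h),u_n^{+})+\cE_{\alpha}(g-h,u_n^{+})$; the first term tends to $0$ by \eqref{eq:sector} and the uniform bound on $\cE_{\alpha}(u_n^{+},u_n^{+})$, and the second tends to $0$ because $u_n^{+}\rightharpoonup 0$ weakly in $(\cF,\cE_{\alpha}^{(s)})$ and $v\mapsto\cE_{\alpha}(g-h,v)$ is $\cE_{\alpha}^{(s)}$-continuous by the sector condition. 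This gives $h_n\to h$ strongly, and then $g_n-g=(\tilde g_n-g)-(h_n-h)\to 0$ as well. With either completion your argument is a valid, self-contained proof of the lemma.
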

\begin{proof}
The proof is the same as that of \cite[Proposition 4]{Z}.
\end{proof}

\begin{proof}[Proof of Theorem \ref{mainThm}]
Fist, we prove \((\ref{eq:varineq})\) and \((\ref{eq:supinf})\) in Theorem \ref{mainThm} in a similar way to that of \cite{Z}, except for the convergence of an approximate sequence of the solution to the variational inequality. Instead of considering convergence in \(\cE_{\alpha}\) of the approximate sequence, we show the convergence of the Ces\`{a}ro mean of its subsequence.

By Lemma \ref{approxLem}, we take sequences \(\{g_n\}_n, \{h_n\}_n\subset \cF\) such that the pair \((g_n,h_n)\) enjoys the separability condition for each \(n\), and \(g_n\) (resp. \(h_n\)) converges to \(g\) (resp. \(h\)) in \(\cE_{\alpha}\). So, there exists the unique solution \(v_n\in \cF\) of Theorem \ref{mainThm} replaced \(g,h\) by \(g_n, h_n\), respectively. Set 
\[J^n_x(\tau, \sigma):=\mathbb{E}_x[e^{-\alpha (\tau \wedge \sigma)}({\bf 1}_{\{\tau \leq \sigma \}}h_n(X_{\tau})+{\bf 1}_{\{\tau>\sigma\}}g_n(X_{\sigma}))]\]
for stopping times \(\tau, \sigma.\) Then we have
\begin{eqnarray}
\nonumber |J^n_x(\tau, \sigma)-J_x(\tau, \sigma)| &\leq & \mathbb{E}_x[e^{-\alpha \tau}|h_n-h|(X_{\tau})]+\mathbb{E}_x[e^{-\alpha \sigma}|g_n-g|(X_{\sigma})]\\
&\leq & H_n(x)+G_n(x),\label{eq:HnGn}
\end{eqnarray}
where \(H_n(x):=\sup_{\tau}\mathbb{E}_x[e^{-\alpha \tau}|h_n-h|(X_{\tau})]\) and \(G_n(x):=\sup_{\sigma}\mathbb{E}_x[e^{-\alpha \sigma}|g_n-g|(X_{\sigma})].\)
Since \(H_n\) and \(G_n\) are the solutions to Theorem \ref{Zhang} replaced \(g\) by \(|h_n-h|\) and \(|g_n-g|\), respectively, we have
\begin{eqnarray*}
\cE_{\alpha}(H_n,H_n) &\leq &\cE_{\alpha}(H_n, |h_n-h|).
\end{eqnarray*}
By using (\ref{eq:sector}) and the Markov property for \((\cE,\cF)\), we have 
\begin{eqnarray*}
\cE_{\alpha}(H_n,H_n) &\leq &K_{\alpha}^2\cE_{\alpha}(|h_n-h|,|h_n-h|)\\
&\leq & K_{\alpha}^2\cE_{\alpha}(h_n-h,h_n-h).
\end{eqnarray*}
So \(H_n\) converges to \(0\) in \(\cE_{\alpha}\) as \(n \to \infty.\) Similarly, \(G_n\) also converges to \(0\) in \(\cE_{\alpha}\) as \(n \to \infty.\) By applying \cite[Theorem 2.2.5]{O} to any subsequence of \(\{H_n\}_n\) and \(\{G_n\}_n\), \(H_n\) and \(G_n\) converges to \(0\) q.e. as \(n \to \infty\). Since the pair \((g_n, h_n)\) enjoys the separability condition for each \(n\), there exists \(v_n\in \cF\) satisfying Theorem \ref{mainThm} replaced \(g,h\) by \(g_n, h_n\), respectively, and there exists properly exceptional set \(N_n\) such that \(v_n(x)=\inf_{\tau}\sup_{\sigma}J_x^n(\tau,\sigma)=\sup_{\sigma}\inf_{\tau}J_x^n(\tau,\sigma)\) for q.e. \(x\in N_n^c.\) Put \(u(x):=\inf_{\tau}\sup_{\sigma}J_x(\tau,\sigma)\) for q.e. \(x\in E\). Since both \(G_n\) and \(H_n\) do not depend on stopping times \(\tau, \sigma,\) we have \(|v_n-u|\leq |G_n+H_n|\) q.e. by \((\ref{eq:HnGn})\), and \(v_n\) converges to \(u\) q.e. as \(n\to \infty\). Similarly, \(v_n\) converges to \(\sup_{\sigma}\inf_{\tau}J_x(\tau,\sigma)\) q.e. and so \(u(x)=\inf_{\tau}\sup_{\sigma}J_x(\tau,\sigma)=\sup_{\sigma}\inf_{\tau}J_x(\tau,\sigma)\) q.e.

By \cite[Theorem 1.1.1]{O}, there is the solution \(v\in \cF\) to \((\ref{eq:varineq})\). To show that \(u=v\) q.e., we prove that \(u\) is also the solution to \((\ref{eq:varineq})\). Since \(u(x)=\inf_{\tau}\sup_{\sigma}J_x(\tau,\sigma)\) q.e., we have, for  q.e. \(x\),
\[u(x)\leq \sup_{\sigma}J_x(0,\sigma)= \sup_{\sigma}\mathbb{E}_xh(X_{0})=h(x)\] and
\[u(x)\geq \inf_{\tau}J_x(\tau,0)= \inf_{\tau} \mathbb{E}_xg(X_{0})=g(x).\]
Since \(v_n\) is the solution to \((\ref{eq:varineq})\) replaced \((g, h)\) by \((g_n, h_n)\), and \(g_n\to g\) in \(\cE_{\alpha}\), by \((\ref{eq:sector})\), we have
\begin{equation}
\cE_{\alpha}(v_n,v_n)\leq K_{\alpha}^2\cE_{\alpha}(g_n,g_n)\leq  K_{\alpha}^2(\cE_{\alpha}(g,g)+C)<\infty \label{eq:secv_n}
\end{equation}
for large \(n\) and some constant \(C.\) By Banach Saks's theorem, there exists a subsequence \(\{v_{n_k}\}_k\) such that its Ces\`{a}ro mean \(\frac{1}{N}\sum_{k=1}^Nv_{n_k}\) converges to some \(\check{v} \in \cF\) in \(\cE_{\alpha}\) as \(N\to \infty,\) and so \(\check{v}=u\) q.e.

For any \(\phi \in \cF\) with \(g\leq \phi \leq h\) \(m\)-a.e., we set \(\phi_n:=(g_n\vee \phi)\wedge h_n \in \cF.\) Then we have
\begin{eqnarray}
\nonumber &&\cE_{\alpha}(\frac{1}{N}\sum_{k=1}^Nv_{n_k}, \phi-\frac{1}{N}\sum_{k=1}^Nv_{n_k})\\
\nonumber &&\hspace{10mm} =\frac{1}{N^2}\sum_{k=1}^N\sum_{j=1}^N \cE_{\alpha}(v_{n_k}, \phi-v_{n_j})\\
\nonumber &&\hspace{10mm}= \frac{1}{N^2}\sum_{k=1}^N \sum_{j=1}^N \left\{\cE_{\alpha}(v_{n_k}, \phi-\phi_{n_k})+ \cE_{\alpha}(v_{n_k}, \phi_{n_k}-v_{n_k}) + \cE_{\alpha}(v_{n_k}, v_{n_k}-v_{n_j})\right\}\\
\nonumber &&\hspace{10mm}\geq  \frac{1}{N^2}\sum_{k=1}^N \sum_{j=1}^N \left\{\cE_{\alpha}(v_{n_k}, \phi-\phi_{n_k})+ 0 + \cE_{\alpha}(v_{n_k}, v_{n_k}-v_{n_j})\right\}\\
\nonumber &&\hspace{10mm}=\frac{1}{N}\sum_{k=1}^N \cE_{\alpha}(v_{n_k}, \phi-\phi_{n_k})+\frac{1}{N^2}\sum_{k<j} \cE_{\alpha}(v_{n_k}-v_{n_j}, v_{n_k}-v_{n_j})\\
&& \hspace{10mm}\geq \frac{1}{N}\sum_{j=1}^N \cE_{\alpha}(v_{n_k}, \phi-\phi_{n_k}). \label{eq:ces1}
\end{eqnarray}
Here we used \((\ref{eq:varineq})\) at the first inequality above. Since \(g_n\) converges to \(g\) and \(h_n\) converges to \(h\) in \(\cE_{\alpha}\), \(\phi_{n_k}\) converges to \(\phi\) in \(\cE_{\alpha}.\) Thus, by \((\ref{eq:secv_n})\), it holds that
\begin{eqnarray*}
|\cE_{\alpha}(v_{n_k}, \phi -\phi_{n_k})|& \leq &K_{\alpha}\cE_{\alpha}(v_{n_k}, v_{n_k})^{1/2}\cE_{\alpha}(\phi -\phi_{n_k}, \phi -\phi_{n_k})^{1/2}\\
&\leq & K_{\alpha}^2 (C+\cE_{\alpha}(g,g))^{1/2}\cE_{\alpha}(\phi -\phi_{n_k}, \phi -\phi_{n_k})^{1/2},
\end{eqnarray*}
so \(\cE_{\alpha}(v_{n_k}, \phi -\phi_{n_k})\) converges to \(0\) as \(n_k\to \infty\) and so does its Ces\`{a}ro mean \( \frac{1}{N}\sum_{j=1}^N \cE_{\alpha}(v_{n_k}, \phi -\phi_{n_k})\). Combining this with \((\ref{eq:ces1})\), by letting \(N\) tend to infinity, for any \(\phi\in \cF\) with \(g\leq \phi \leq h\) \(m\)-a.e, it holds that \(\cE_{\alpha}(u,\phi -u)\geq 0.\) This means that \(u\) is also a solution for \((\ref{eq:varineq})\) and so \(u=v\) q.e.

Next, we prove \((\ref{eq:saddle})\) in a similar way to that of \cite[Section 4]{FM} except for the convergence of approximate sequence \(\{v_n\}_n\) in \(\cE_{\alpha}.\) Instead of using approximate sequence \(\{v_n\}_n\), we take another approximation converging to the solution \(v.\)

Since the pair \((h_n, g_n)\) satisfies the separability condition, so does the pair \((\frac{1}{N}\sum_{k=1}^N h_{n_k}, \frac{1}{N}\sum_{k=1}^N g_{n_k})\). Thus there exists \(\check{v}_N \in \cF\) such that \(\frac{1}{N}\sum_{k=1}^N g_{n_k} \leq \check{v}_N \leq \frac{1}{N}\sum_{k=1}^N h_{n_k}\), \(m\)-a.e. and there exists properly exceptional set \(M_N\) such that 
\begin{eqnarray*}
\nonumber \check{v}_N(x)&=&\inf_{\tau}\sup_{\sigma} \mathbb{E}_x\left[e^{-\alpha (\tau \wedge \sigma)}(\frac{1}{N}\sum_{k=1}^N h_{n_k} (X_{\tau}) {\bf 1}_{\{\tau \leq \sigma\}}+\frac{1}{N}\sum_{k=1}^N g_{n_k} (X_{\sigma}) {\bf 1}_{\{\tau > \sigma\}})\right]\\
&=& \sup_{\sigma} \inf_{\tau}\mathbb{E}_x\left[e^{-\alpha (\tau \wedge \sigma)}(\frac{1}{N}\sum_{k=1}^N h_{n_k} (X_{\tau}) {\bf 1}_{\{\tau \leq \sigma\}}+\frac{1}{N}\sum_{k=1}^N g_{n_k} (X_{\sigma}) {\bf 1}_{\{\tau > \sigma\}})\right]
\end{eqnarray*}
for any \(x\in M_N^c.\) On the other hand, by \((\ref{eq:supinf})\), we have
\begin{eqnarray*}
v(x)&=&\inf_{\tau}\sup_{\sigma} \mathbb{E}_x\left[e^{-\alpha (\tau \wedge \sigma)}(h(X_{\tau}) {\bf 1}_{\{\tau \leq \sigma\}}+g(X_{\sigma}) {\bf 1}_{\{\tau > \sigma\}})\right]\\
&=&\sup_{\sigma}\inf_{\tau} \mathbb{E}_x\left[e^{-\alpha (\tau \wedge \sigma)}(h(X_{\tau}) {\bf 1}_{\{\tau \leq \sigma\}}+g(X_{\sigma}) {\bf 1}_{\{\tau > \sigma\}})\right]
\end{eqnarray*}
for \(x\in N^c.\) Let \(\check{h}_N:=\frac{1}{N}\sum_{k=1}^N h_{n_k}\) and \(\check{g}_N:=\frac{1}{N}\sum_{k=1}^N g_{n_k}\). Then we have
\begin{eqnarray}
\nonumber &&\left| \mathbb{E}_x[e^{-\alpha (\tau \wedge \sigma)}(\check{h}_N(X_{\tau}){\bf 1}_{\{\tau \leq \sigma\}} +\check{g}_N(X_{\sigma}){\bf 1}_{\tau >\sigma})]\right.\\
\nonumber && \hspace{20mm} \left.-\mathbb{E}_x[e^{-\alpha (\tau \wedge \sigma)}(h(X_{\tau}){\bf 1}_{\{\tau \leq \sigma\}} +g(X_{\sigma}){\bf 1}_{\{\tau >\sigma\}})] \right|\\
\nonumber &&\hspace{10mm} \leq 
\mathbb{E}_x[e^{-\alpha \tau }|\check{h}_N-h|(X_{\tau})]+
\mathbb{E}_x[e^{-\alpha \sigma }|\check{g}_N-g|(X_{\sigma})]\\
&& \hspace{10mm} \leq
\sup_{\tau} \mathbb{E}_x[e^{-\alpha \tau }|\check{h}_N-h|(X_{\tau})]+ \sup_{\sigma}
\mathbb{E}_x[e^{-\alpha \sigma }|\check{g}_N-g|(X_{\sigma})]. \label{eq:tilsad}
\end{eqnarray}
Since \(\check{H}_N:= \sup_{\tau} \mathbb{E}_x[e^{-\alpha \tau }|\check{h}_N-h|(X_{\tau})]\) is the solution of Theorem \ref{Zhang} replaced \(g\) by \(|\check{h}_N-h|\), it holds that
\[\cE_{\alpha}(\check{H}_N, \check{H}_N)\leq \cE_{\alpha}(\check{H}_N,|\check{h}_N-h|)\]
and, by the sector condition \((\ref{eq:sector})\), we have
\begin{eqnarray*}
\cE_{\alpha}(\check{H}_N, \check{H}_N)&\leq & K_{\alpha}^2 \cE_{\alpha}(|\check{h}_N-h|,|\check{h}_N-h|)\\
&\leq & K_{\alpha}^2 \cE_{\alpha}(\check{h}_N-h,\check{h}_N-h).
\end{eqnarray*}
Since \(\check{h}_N\) converges to \(h\) in \(\cE_{\alpha}\), \(\cE_{\alpha}(\check{H}_N, \check{H}_N)\) converges to \(0\) as \(N\to \infty.\) By \cite[Theorem 2.2.5]{O}, \(\check{h}_N\) converges to \(0\) quasi-uniformly, that is, there exist increasing sequence of closed set \(\{F_j\}_j\), subsequence \(\{N_k\}\) and the modification \(\check{H}_{N_k}\) such that \( \lim_{J\to \infty }\text{Cap}( E \setminus \bigcup_{j=1}^J F_j)=0\) and \(\check{H}_{N_k}\) converges to \(0\) uniformly on each \(F_j.\) Similarly, \(\check{G}_N\) converges to \(0\) quasi-uniformly.
By taking modifications of \(\check{v}_{N_k}\) and \(v\) if necessary, by \((\ref{eq:tilsad})\), it holds that \(|\check{v}_N-v|\leq \check{H}_N+\check{G}_N\). So \(\check{v}_{N}\) converges to \(v\) quasi-uniformly. The rest of the proof is similar to \cite[Section 4]{FM}, but we describe the proof for reader's convenience.

Since \(\check{v}_{N_k},\check{h}_{N_k}, \check{g}_{N_k}\) converges to \(v, h, g,\) quasi-uniformly respectively, there exists increasing sequence of closed sets \(\{F_j\}_j\) such that \( \lim_{J\to \infty }\text{Cap}( E \setminus \bigcup_{j=1}^J F_j)=0\),  \(\check{v}_{N_k},\check{h}_{N_k}, \check{g}_{N_k}\) converges to \(v, h, g,\) uniformly on each \(F_j.\) Set stopping times \(\tau_{\gamma}:=\inf\{t>0: v(X_t)+\gamma \geq h(X_t)\}\) and \(T_j:=\inf\{t>0: X_t\in E\setminus F_j \}\) for \(\gamma>0\) and  \(j\in \mathbb{N}.\) Without loss of generality, we may assume \(\mathbb{P}_x(\lim_{j\to \infty} T_j=\infty)=1\) q.e. \(x.\) It is enough to show that 
\begin{equation}
\mathbb{E}_x[e^{-\alpha \sigma}v(X_{\sigma})] \leq v(x)\leq \mathbb{E}_x[e^{-\alpha \tau}v(X_{\tau})]\label{eq:FMsaddle}
\end{equation}
for q.e. \(x\in E\) and stopping times \(\tau, \sigma\) satisfying \(\sigma \leq \hat{\tau}\) and \(\tau \leq \hat{\sigma}\) \(\mathbb{P}_x\)-a.e. Indeed, for any stopping time \(\sigma\), \((\ref{eq:FMsaddle})\) and \(g\leq v\) implies

\begin{eqnarray*}
J_x(\hat{\tau}, \sigma)&=&\mathbb{E}_x[e^{-\alpha \hat{\tau}}h(X_{\hat{\tau}}){\bf 1}_{\{\hat{\tau}\leq \sigma\}}+e^{-\alpha \sigma}g(X_{\sigma}){\bf 1}_{\{\hat{\tau} >\sigma\}}
]\\
&\leq &\mathbb{E}_x[e^{-\alpha (\hat{\tau}\wedge \sigma)}v(X_{\hat{\tau}\wedge \sigma})]\\
&\leq &v(x)
\end{eqnarray*}
for q.e. \(x.\) Similarly, for any stopping time \(\tau\), it holds that \(J_x(\tau, \hat{\sigma})\geq v(x)\) q.e. \(x.\)

Since the pair \((\check{h}_{N_k}, \check{g}_{N_k})\) satisfies the separability condition, by the same way as \cite{N}, \cite{FM} or the proof of Theorem \ref{mainThm} under the separability condition, we have
\begin{equation}
\mathbb{E}_x[e^{-\alpha \sigma}\check{v}_{N_k}(X_{\sigma})] \leq \check{v}_{N_k}(x)\leq \mathbb{E}_x[e^{-\alpha \tau}\check{v}_{N_k}(X_{\tau})]\label{eq:FMsaddle_N_k}
\end{equation}
for q.e. \(x\in E\) and stopping times \(\tau, \sigma\) satisfying \(\sigma \leq \hat{\tau}_{N_k}\) and \(\tau \leq \hat{\sigma}_{N_k}\) \(\mathbb{P}_x\)-a.e., where \(\hat{\tau}_{N_k}:=\inf\{t>0 : \check{v}_{N_k}(X_t)=\check{h}_{N_k}(X_t)\}\) and \(\hat{\sigma}_{N_k}:=\inf\{t>0 : \check{v}_{N_k}(X_t)=\check{g}_{N_k}(X_t)\}\).

Fix a stopping time \(\sigma\). For any \(\gamma>0,\) and \(j\in \mathbb{N}\), by the quasi-uniformly convergences, 
\[|\check{v}_{N_k}(x)-v(x)|\leq \frac{\gamma}{2},\ \  |\check{h}_{N_k}(x)-h(x)|\leq \frac{\gamma}{2}\]
for large \(k\) and \(x\in F_j\). Then it holds that \(\tau_{\gamma}\wedge T_j\wedge \sigma \leq \hat{\tau}_{N_k}\). Indeed, for any \(t<\tau_{\gamma}\wedge T_j\wedge \sigma\), it holds that
\[\check{v}_{N_k}(X_t)\leq v(X_t)+\frac{\gamma}{2}< h(X_t)-\frac{\gamma}{2}<\check{h}_{N_k}(X_t).\]
Thus \((\ref{eq:FMsaddle_N_k})\) implies that
\[\check{v}_{N_k}(x)\geq \mathbb{E}_x[e^{-\alpha(\tau_{\gamma}\wedge T_j\wedge \sigma)}\check{v}_{N_k}(X_{\tau_{\gamma}\wedge T_j\wedge \sigma})]\]
for any \(x\in F_j.\) Letting \(k, j\) tend to infinity and \(\gamma\) tend to \(0\), Lebesgue's convergence theorem, the uniform integrability of \(\{e^{-\alpha T}v(X_T)\}_{T }\) and the quasi-left continuity for \(X\) imply \(v(x)\geq \mathbb{E}_x[e^{-\alpha \sigma}v(X_{\sigma})]\). The another inequality in \((\ref{eq:FMsaddle})\) can be proved similarly, so the proof is completed.
\end{proof}

\section{Examples}\label{examples}
\subsection{Callable options}
Some callable options are examples of the Dynkin games. Considering a regular lower bounded semi-Dirichlet form \((\cE, \cF)\) on \(L^2(\mathbb{R};dx)\) and let \(X\) be the Hunt process associated with \((\cE, \cF)\) starting from \(x \in \mathbb{R}\). We treat \(X\) as the price process of an asset with discount rate \(\alpha\). Fix \(g, h \in \cF\) satisfying \(0\leq g\leq h\) a.e. We define a callable option as follows. If a buyer exercises the option at time \(\sigma\) before a seller redeems the option at \(\tau\), then the buyer gains \(e^{-\alpha \sigma}g(X_{\sigma})\). If the seller redeems the option at time \(\tau\) before the buyer exercises the option at \(\sigma\), then the buyer gains \(e^{-\alpha \tau}h(X_{\tau})\). The difference \(h-g\) is the penalty which the seller pays to the buyer. So the expectation of the gain of the buyer is \(J_x(\tau, \sigma)\). Because this is the zero-sum game, the seller tries to choose \(\tau\) so that \(J_x(\tau, \sigma)\) is less, and the buyer tries to choose \(\sigma\) so that \(J_x(\tau, \sigma)\) is more. Thus the equilibrium price is \(v(x)=\inf_{\tau} \sup_{\sigma} J_x(\tau, \sigma)=\sup_{\sigma} \inf_{\tau} J_x(\tau, \sigma)\) and \((\hat{\tau}, \hat{\sigma})\) defined in Section \ref{Introduction} attains \(v\).

Furthermore, consider the specific case of \((\cE,\cF)\). Let \(\lambda>0\) and \(\mu>0\). Assume that \(\cE\) is given by 
\begin{equation}
\cE(f,g):=\lambda \int_{\mathbb{R}} \frac{df}{dx} \frac{dg}{dx} dx - (\mu -\frac{\lambda^2}{2})\int_{\mathbb{R}}\frac{df}{dx} g dx
\end{equation}
for any \(f,g\in C_c^1(\mathbb{R})\). Then, by \cite[Theorem 1.5.2]{O}, there exists \(\alpha_0 \geq 0\) such that \((\cE.1)\) and \((\cE.2)\) hold for \(C_c^1(\mathbb{R})\).
Define \(\cF\) as the \(\cE_{\alpha_0}\)-closure of \(C_c^1(\mathbb{R})\), then \((\cE, \cF)\) is a regular lower bounded semi-Dirichlet form on \(L^2(\mathbb{R};dx)\) by \cite[Theorem 1.5.3]{O}. Let \(X\) be the Hunt process associated with \((\cE, \cF)\), then \(X_t\) has the same distribution as that of \(\lambda W_t + (\mu -\lambda^2/2)t\) for \(t\geq 0\), where \(W\) is Brownian motion starting from \(x\). Taking \(N>0\) large enough, for a constant \(K>0\) and a non-negative function \(p \in \cF\), we define \(g(x):=0 \vee (e^x-K){\bf 1}_{\{x\leq N\}}\) and \(h(x):=g(x)+p(x)\). 

The process \(\{e^{X_t}\}_t\) starting at \(e^{\lambda x}\) is called a geometric Brownian motion whose drift is \(\mu\) and volatility is \(\lambda\), and this is often used as a model of an asset. So, in the above settings, the solution of \((\ref{eq:varineq})\) is the equilibrium price of the callable put option with the strike price \(K\). 

\subsection{Diffusion process with drifts}
We apply Theorem \ref{mainThm} to the case of diffusion process treated in \cite[\S II.2. d]{MR}.

Let \(d\geq 3\), \(E:=U \subset \mathbb{R}^d\) be an open set and \(dm=dx\) be the Lebesgue measure on \(U\). For \(1\leq i, j\leq d\), let \(a_{ij} \in L^1_{loc}(U), b_i, d_i \in L^d_{loc}(U)\), and \(c \in L^{d/2}_{loc}(U)\) satisfying \(d_i -b_i \in L^d_{loc}(U) \cap L^{\infty}_{loc}(U)\),
\[\sum_{i=1}^d a_{ij}(x)\xi_i \xi_j \geq 0\ \text{\ for\ }\xi=(\xi_1, \cdots, \xi_d) \in \mathbb{R}^d\ \text{and\ a.e.\ }x,\]
and
\[\int (cu+\sum_{i=1}^d b_i  \frac{\partial u}{\partial x_i})dx \geq 0, \ \int (cu+\sum_{i=1}^d d_i  \frac{\partial u}{\partial x_i})dx \geq 0\]
for any \(u\in C_c^{\infty}(U)\) with \(u\geq 0.\) For any \(u,v \in C_c^{\infty}(U)\), we define
\[\cE(u,v):= \sum_{i,j=1}^d \int a_{ij}\frac{\partial u}{\partial x_i}\frac{\partial v}{\partial x_j}dx +\sum_{i=1}^d \int b_i\frac{\partial u}{\partial x_i}v dx+\sum_{j=1}^d \int d_{j}u\frac{\partial v}{\partial x_j}dx + \int cuvdx.\]

Assume that there exists \(c_1>0\) such that \[\sum_{i,j=1}^d \frac{a_{ij}(x)+a_{ji}(x)}{2}\xi_i \xi_j \geq c_1 \sum_{i=1}^d |\xi_i|^2\ \text{for\ any\ }\xi \in \mathbb{R}^d\text{\ and\ }m\text{-a.e.\ }x,\]
and 
\[\max_{i,j}\left|\frac{a_{ij}(x)-a_{ji}(x)}{2}\right| \text{\ is\ bounded\ for\ }m\text{-a.e.\ }x\]. Denote by \(\cF\) the \(\cE\)-closure of \(C_c^{\infty}(U)\), then \((\cE, \cF)\) is a regular non-symmetric Dirichlet form on \(L^2(U)\) by \cite[\S II.2. d]{MR}, where non-symmetric Dirichlet form is a lower bounded semi-Dirichlet form with \(\alpha_0=0\) and satisfying the following condition \((\hat{\cE}.4)\).\\
\((\hat{\cE}.4)\) For any \(u\in \cF\) and \(a\geq 0,\) it holds that \(u\wedge a \in \cF\) and \(\cE(u-u\wedge a, u\wedge a)\geq 0.\)

For \(g,h \in \cF\) satisfying \(g\leq h\) a.e., we consider the variational inequality as follows.
\begin{eqnarray*}
\cE_{\alpha}(v,u-v)&=&\sum_{i,j=1}^d \int a_{ij}\frac{\partial v}{\partial x_i}\frac{\partial(u-v)}{\partial x_j}dx +\sum_{i=1}^d \int b_i\frac{\partial v}{\partial x_i}(u-v) dx\\
&&+\sum_{j=1}^d \int d_{j}v\frac{\partial (u-v)}{\partial x_j}dx + \int (c+\alpha)v(u-v)dx \geq 0
\end{eqnarray*}
for any \(u\in \cF\) with \(g\leq u\leq h\), and \(g\leq v\leq h \). By Theorem \ref{mainThm}, \(v\) defined by \((\ref{eq:supinf})\) is the unique solution of this variational inequality.

\subsection{Jump process}
Next we apply Theorem \ref{mainThm} to jump processes which are special cases of semi-Dirichlet forms constructed in \cite{U}. Let \(d\geq 1\), \(b=(b_1, \cdots, b_d)\in \mathbb{R}^d\) and \(k(x,y)\) be a measurable function defined on \(\mathbb{R}^d \times \mathbb{R}^d \setminus \{(x,x); x\in \mathbb{R}^d\}.\) Assume the following conditions (J.1), (J.2) and (J.3').\\
(J.1) \(M_s \in L^1_{loc}(\mathbb{R}^d)\) for \(M_s(x):=\int_{\{y\neq x\}}(1\wedge |x-y|^2)k_s(x,y)dy\)\\
(J.2) There exists a constant \(\gamma \in (0,1]\) and \(C>0\) such that 
\[\sup_{x\in \mathbb{R}^d}\int_{|x-y|\geq 1}|k_a(x,y)|dy +\sup_{x\in \mathbb{R}^d}\int_{|x-y|< 1}|k_a(x,y)|^{\gamma}dy< \infty\]
and \(|k_a(x,y)|^{2-\gamma} \leq C k_s(x,y)\) for \(x,y \in \mathbb{R}^d\) with \(0<|x-y|<1.\) Here \(k_s\) and \(k_a\) are defined by
\[k_s(x,y):=\frac{k(x,y)+k(y,x)}{2},\ k_a(x,y):=\frac{k(x,y)-k(y,x)}{2}\]
for \(x,y \in \mathbb{R}^d\) with \(x\neq y,\) respectively.\\
(J.3') There exists a constant \(\kappa >0\) such that \(k(x,y)\geq \kappa |x-y|^{-d-1}\) for \(x,y \in \mathbb{R}^d\) with \(0<|x-y|<1\) and \(\sum_{i=1}^d |b_i| \leq \kappa \pi ^{d/2+1}/8\Gamma(d/2+1/2)\), where \(\Gamma\) is the Gamma function.

We define \((\cE,\cF)\) by
\begin{eqnarray*}
\cE(f,g)&:=&\frac{1}{2}\int\int_{\{x\neq y\}}(f(x)-f(y))(g(x)-g(y))k_s(x,y)dxdy \\
&& +\int\int_{\{x\neq y\}}(f(x)-f(y))g(x)k_a(x,y)dxdy + \sum_{i=1}^d b_i\int \frac{\partial f}{\partial x_i} \cdot g(x) dx
\end{eqnarray*}
for \(f,g \in C_c^1(\mathbb{R}^d)\), and \(\cF\) be the \(\cE\)-closure of \(C_c^1(\mathbb{R}^d)\). Then \((\cE,\cF)\) is a regular lower bounded semi-Dirichlet form on \(L^2(\mathbb{R}^d)\) by \cite{U}. By applying Theorem \ref{mainThm}, the unique solution of the variational inequality \((\ref{eq:varineq})\) is \(v\) defined in \((\ref{eq:supinf})\).

\end{document}